\newtheorem{theorem}{Theorem}[section]
\newtheorem{lemma}[theorem]{Lemma}
\newtheorem{corollary}[theorem]{Corollary}
\newtheorem{proposition}[theorem]{Proposition}
\theoremstyle{definition}
\newtheorem{example}[theorem]{Example}
\theoremstyle{remark}
\newtheorem{remark}[theorem]{Remark}
\newtheorem{conjecture}[theorem]{Conjecture}
\numberwithin{equation}{section}
\DeclarePairedDelimiter\set{\{}{\}}
\DeclarePairedDelimiter\floor{\lfloor}{\rfloor}
\DeclarePairedDelimiter\prnths{(}{)}
\DeclarePairedDelimiter\abs{\lvert}{\rvert}
\DeclarePairedDelimiter\norm{\lVert}{\rVert}
\DeclarePairedDelimiter\cyclic{\langle}{\rangle}
\DeclareMathOperator{\sign}{sign}
\newcommand*{\R}{\mathbb{R}}
\newcommand*{\Z}{\mathbb{Z}}
\newcommand*\D[1]{\Delta_{#1}}
\newcommand*{\PluckerEqTuples}[6]{\D{#1}\D{#2} = \D{#3}\D{#4} + \D{#5}\D{#6}}
\newcommand*{\PluckerEq}[4]{\PluckerEqTuples{#1,#3}{#2,#4}{#1,#2}{#3,#4}{#1,#4}{#3,#2}}
\newcommand*{\Gr}{Gr(2,n)}
\newcommand*{\Grp}{Gr^{>0}(2,n)}
\newcommand*{\GrpK}[1]{Gr^{>0}(#1)}
\begin{document}

\title{The Regular Polygon Minimizes The Ratio of Plucker Coordinates on The Positive Grassmannian}


\author{Vadim Ogranovich}
\address{}
\curraddr{}
\email{vograno@gmail.com}
\thanks{I am grateful to Prof. Alexandre Eremenko for his generous help at various stages of this long project.}

\subjclass[2010]{Primary 14M15, 90C27}

\date{}

\dedicatory{}

\begin{abstract}
For a point $x$ on the Positive Grassmannian of two-dimensional subspaces in $\mathbb{R}^n$, define the loss function $E(x)$ as the ratio of its largest and smallest Plucker coordinates. We solve the extremal problem of minimizing the loss function $E(x)$ over the Grassmannian. This minimax problem was posed by Berman, et al. in their paper on error-correcting codes over the real numbers.
\end{abstract}

\maketitle
\section{Introduction}

In their paper \cite{berman} on optimal error-correcting codes over $\R$, Berman, et al. asked the following question.
Let $X$ be a $2 \times n$ real matrix, and let $\D{i,j}(X)$ denote the minors of $X$,
where $(i,j) \in \binom{[n]}{2}$, and $[n]$ denotes the range of integers from one to $n$,
and $\binom{[n]}{k}$ denotes the set of \emph{ordered} $k$-tuples drawn from the integer range $[n]$
\footnote{We emphasize we use \emph{ordered} pairs $(i,j)$ as the index set of the minors.}.
We often omit the dependency on $X$ and write $\D{i,j}$ when the matrix is clear from the context.
Define the loss function
\begin{equation}
\label{D:Ex}
    E(X) = \frac{\max\limits_{i<j} \D{i,j}}{\min\limits_{i<j} \D{i,j}}.
\end{equation}
The question: what matrix minimizes $E(X)$, subject to the constraints $\D{i,j}>0$, where $(i,j) \in \binom{[n]}{2}$?

\newcommand{\cosk}[1]{\cos #1 \frac{\pi}{n}}
\newcommand{\sink}[1]{\sin #1 \frac{\pi}{n}}
Define the \emph{cyclic matrix} $C$:
\begin{equation}
\label{D:cyclic_matrix}
C =
\begin{bmatrix}
    \cosk{0}, &\cosk{1}, &\dots, &, \cosk{(n-1)} \\
    \sink{0}, &\sink{1}, &\dots, &, \sink{(n-1)}
\end{bmatrix}.
\end{equation}
Our main result is that $C$ minimizes $E(X)$ for any $n$.

The proper framework for the Berman's question is that of the \emph{Grassmannian}, 
the variety $\Gr$ of two-dimensional planes in $\R^n$.
Let $x \in \Gr$, and let $X$ be a $2 \times n$ matrix whose rows span the subspace $x$.
We call $X$ the \emph{spanning} matrix of $x$ and denote its span $[X]$, i.e. $x=[X]$.
The minors $\D{i,j}$ of the matrix $X$ are called the \emph{Plucker coordinates} of $x$.
The \emph{Positive Grassmannian} is the subset $\Grp \subset \Gr$ such that, for $x \in \Grp$, the coordinates $\D{i,j}(x)$ are positive for all $(i,j) \in \binom{[n]}{2}$.

It is well known that, up to a non-zero scaling factor, the Plucker coordinates of a point $x$ on the Grassmannian are independent of the choice of the spanning matrix $X$ (\cite{miller_sturmfels} Proposition 14.2), and therefor the function $E(X)$ lifts to $\Grp$. Thus the extremal problem
\begin{equation}
\label{extremal_grplus}
    E^{*} = \min_{x \in \Grp} E(x)
\end{equation}
is well-defined and is going to be the primary subject of this paper.

An immediate benefit of working on the Grassmannian is the ability to meaningfully address the question of uniqueness. We shall prove that the solution of the extremal problem \eqref{extremal_grplus} is unique for the odd $n$, \emph{not} unique for $n \mod 4 = 2$. Uniqueness remains an open question for $n \mod 4 = 0$, other than $n=4$; for the latter we prove the subspace $[C]$ is the unique extremal point.

\begin{remark}
Our proof is based on the Plucker relations \eqref{D:plucker_identity} and thus is purely algebraic.
There is, however, an insight to be gained from viewing $\Gr$ as a set of $2$-dimensional subspaces in $\R^n$.
Since we do not refer to this remark from elsewhere in the paper, we do not give proofs. We refer the interested reader to \cite{karp} for proofs and a much more general treatment.

Define the linear operator $S$ on $\R^n$:
\begin{equation}
    S: (x_1, \dots, x_n) \rightarrow (-x_n, x_1, \dots, x_{n-1}).
\end{equation}
Let $x \in \Gr$, and let $X = [x_1, \dots, x_n]$ be its spanning matrix, i.e. $x = [X]$.
The spanning matrix of the subspace $S x$ is obtained by applying $S$ to each of the two rows of matrix $X$:
\begin{equation}
    S: [x_1, \dots, x_n] \rightarrow [-x_n, x_1, \dots, x_{n-1}].
\end{equation}
This implies that the Plucker coordinates of the subspace $S x$ are exactly those of the subspace $x$, only cyclically shifted to the right. Thus the Positive Grassmannian $\Grp$ is $S$-invariant, and the loss function $E(x)$ is $S$-invariant too:
\begin{equation}
    E(S x) = E(x).
\end{equation}
This observation suggests one looks for $S$-invariant subspaces that belong to $\Grp$, and checks whether they are extremal. \cite{karp} proves that, in particular, the subspace $[C]$ is the only $S$-invariant subspace on $\Grp$ and, in this paper, we prove its extremality.
\end{remark}

This operator view naturally suggests a candidate extremal point for the minimax problem \eqref{extremal_grplus} on the general Grassmannian $\GrpK{k,n}$. For any integer $k<n$, \cite{karp} defines the cyclic shift operator $\sigma$ that, for $k=2$, is the conjugate of our operator $S$.
The action of the operator $\sigma$ on the $k$-dimensional subspaces induces the (left) cyclic shift on the Plucker coordinates, and therefor both the Positive Grassmannian and the loss function $E(x)$ are $\sigma$-invariant. Karp proves (Theorem 1.1) that the operator $\sigma$ has a unique fixed point $V_0$ on the Positive Grassmannian $\GrpK{k,n}$.
\begin{conjecture}
The subspace $V_0$ minimizes $E(x)$ on $\GrpK{k,n}$.
\end{conjecture}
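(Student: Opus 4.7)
The plan is to generalize the strategy of this paper. Since both $\GrpK{k,n}$ and $E$ are $\sigma$-invariant and $V_0$ is, by Karp's Theorem 1.1, the unique $\sigma$-fixed point, it suffices to exhibit a $\sigma$-equivariant procedure that never increases $E$ and whose only invariant point is $V_0$. For $k=2$ the three-term Plucker relations $\PluckerEq{i}{j}{k}{l}$ supply exactly such a trading mechanism between pairs of coordinates.

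First I would set up first-order optimality conditions at a putative minimizer $x^* \in \GrpK{k,n}$: on the tangent space of $\GrpK{k,n}$ at $x^*$, the differentials of the \emph{tight} Plucker coordinates (those achieving the max or the min of the ratio) must be linearly dependent modulo the Plucker ideal. This dependence yields a combinatorial certificate that ties the max-achieving $k$-subsets to the min-achieving ones through the Plucker syzygies. Next I would average this certificate over the cyclic group $\cyclic{\sigma}$: since $E(\sigma x) = E(x)$, the entire orbit of $x^*$ consists of minimizers, and one hopes to conclude that some orbit-average, properly constructed to stay on the Grassmannian, is again a minimizer and is $\sigma$-invariant, hence equals $V_0$.

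The hard part is the last step. For $k=2$ the three-term relations are simple enough that one can push the averaging through; for $k \geq 3$ the Plucker ideal is generated by higher-degree syzygies, and a naive orbit-average of Plucker coordinates immediately leaves the Grassmannian. I expect the main obstacle to be the construction of a geometric averaging on $\GrpK{k,n}$ that is genuinely non-increasing for $E$ — perhaps via Postnikov's positive parameterizations of the top positroid cell, in which $\sigma$ acts by an explicit birational map, or by interpolating along geodesics of a $\sigma$-invariant Riemannian metric on the Grassmannian.

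A parallel approach worth pursuing is the amoeba/moment-map viewpoint: $\log E(x)$ is the diameter (in the sense of maximal coordinate gap) of the image of $x$ under the logarithmic Plucker map, and one would try to show this diameter is minimized at the unique $\sigma$-invariant point, exploiting the convexity of the positive tropical Grassmannian and the fact that $V_0$ should project to the barycenter of the natural $\sigma$-symmetric hypersimplex. In either approach the technical core — verifying that no non-symmetric local minimum exists inside a given positroid stratum — is what I expect will resist a purely algebraic treatment and demand new ideas beyond the three-term Plucker manipulations used in the $k=2$ case.
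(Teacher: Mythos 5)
This statement is a \emph{conjecture} in the paper: no proof of it is given there, and what you have written is a research program rather than a proof, with the decisive steps explicitly left open (``the hard part is the last step,'' the construction of an $E$-non-increasing, $\sigma$-equivariant averaging on $\GrpK{k,n}$, and the exclusion of non-symmetric local minima). As it stands there is a genuine gap: the entire argument hinges on producing an orbit-averaging of \emph{points} of the positive Grassmannian that both stays on the Grassmannian and does not increase $E$, and on concluding from it that a minimizer must be (or can be replaced by) the $\sigma$-fixed point $V_0$. Neither step is supplied, and the second one cannot be strengthened to ``every minimizer is $\sigma$-invariant'': already for $k=2$ and $n \equiv 2 \pmod 4$ the paper exhibits a continuous family $[C^q]$ of minimizers, only one of which is $\sigma$-invariant. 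So any viable version of your plan must settle for showing that \emph{some} minimizer is $\sigma$-invariant, and the proposed first-order/tangent-space certificate at an arbitrary minimizer does not obviously yield that.

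You also slightly mischaracterize how the $k=2$ case is actually handled, which matters if you intend to generalize it. The paper does not use first-order optimality conditions, nor any averaging of subspaces. Instead it averages \emph{Plucker coordinates}: the cyclic shift acts consistently on the three-term relations (Proposition \ref{P:sigma_action_consistent}), so taking geometric means over each $\sigma$-orbit and invoking superadditivity of the geometric mean turns the Plucker identities into the orbit-level inequalities \eqref{E:geomeanJKL} for the means $D_k$; passing to normalized logs $a_k = \log(D_k/s_k)$ and using the exact ``weights'' identity satisfied by the sines gives the linear system \eqref{lin_k}, and a contraction/maximum-principle argument yields $a_k \ge 0$, hence $E(x) \ge L(x) \ge L(C) = E(C)$. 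The natural generalization of \emph{that} strategy to $\GrpK{k,n}$ would be to orbit-average the (now higher and more numerous) Plucker relations and compare against the exact relations satisfied by Karp's fixed point $V_0$; the obstruction is that the three-term structure and the two-outer-orbit bookkeeping ($O_1$, $O_d$) have no evident analogue for $k \ge 3$. Your amoeba/tropical paragraph is a reasonable alternative direction, but it too is only a heuristic here: no convexity statement for $\log E$ on the positive Grassmannian is proved or cited that would force the minimum to sit at the $\sigma$-fixed point. In short, the conjecture remains open, and the proposal identifies plausible lines of attack but does not close any of them.
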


\section{Extremality}
The cyclic symmetry is a crucial ingredient in the study of the positive Grassmannian; see \cite{postnikov}, Sections 2, 6. This problem is no exception.

Let $\sigma$ denote the \emph{right cyclic shift} permutation of $[n]$:
\begin{equation}
\label{D:sigma_action_permutation}
    \sigma \, i =
    \begin{cases}
        i+1 & , i<n \\
        1 & , \text{otherwise},
    \end{cases}
\end{equation}
where $i \in [n]$.
Let $k$ be an integer, we extend the definition of $\sigma$ to let it to act on the set of ordered $k$-tuples $\binom{[n]}{k}$. For an ordered $k$-tuple $(i_1, \dots, i_k)$, by definition,
\begin{equation}
\label{D:sigma_action_ktuples}
    \sigma \, (i_1, \dots, i_k) =
    \begin{cases}
        (i_1+1, \dots, i_k+1) & , i_k<n \\
        (1, i_1+1, \dots, i_{k-1}+1) & , \text{otherwise}.
    \end{cases}
\end{equation}
It is straightforward to verify that the map $\sigma$ maps ordered $k$-tuples onto ordered ones.
Note also that for $1$-tuples, the above definition agrees with the $\sigma$-action on $[n]$.

\begin{proposition}
\label{P:sigma_group_action}
The map $\sigma$ induces an action of the cyclic group $\Z_n$ on the set of ordered $k$-tuples $\binom{[n]}{k}$,
i.e. $\sigma^n$ is the identity map.
\begin{proof}
Let $\sigma_{ew}$ denote the element-wise application of the \emph{permutation} $\sigma$ to
the elements of an unordered $k$-tuple:
\begin{equation*}
    \sigma_{ew} (i_1, \dots, i_k) = (\sigma i_1, \dots, \sigma i_k),
\end{equation*}
and let $sort$ denote the map that sorts an unordered $k$-tuple in the ascending order.
By the definition, the map $\sigma$ is the composition of the element-wise map $\sigma_{ew}$ and the $sort$ map:
\begin{equation*}
    \sigma = sort \cdot \sigma_{ew}.
\end{equation*}
Note that for any $k$-tuple argument $I$, the output $k$-tuple $sort(I)$ is a permutation of $I$:
\begin{equation*}
    sort \, I = p \, I,
\end{equation*}
where the permutation $p$ depends on the argument $I$.

Fix an ordered $k$-tuple $I$, and compute $\sigma^n \, I$:
\begin{align*}
    \sigma^n \, I &= (sort \cdot \sigma_{ew})^n \, I \\
    &= \prnths*{\prod_k^n p_k \cdot \sigma_{ew}} \, I && \text{, where the permutations $p_k$ ultimately depend on the argument $I$.} \\
    &= \prod_k p_k \cdot \sigma_{ew}^n \, I && \text{, since any permutation commutes with $\sigma_{ew}$.} \\
    &= sort \, I && \text{, since $\sigma_{ew}^n$ is the identity map; since the result is sorted by line 1.} \\
    &= I && \text{, since $I$ is ordered by assumption.}
\end{align*}
\end{proof}
\end{proposition}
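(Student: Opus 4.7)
The plan is to identify the set of ordered $k$-tuples $\binom{[n]}{k}$ with the set of $k$-element subsets of $[n]$: the forward map sends an ordered tuple to the unordered set of its entries, while the inverse map sorts a subset in ascending order. Under this identification, I would show that $\sigma$ corresponds to the map that applies the elementary shift $i \mapsto \sigma i$ (as defined in \eqref{D:sigma_action_permutation}) elementwise to each member of the subset. The conclusion would then follow immediately because the shift $\sigma$ on $[n]$ has order $n$: if each element of a subset returns to its starting value after $n$ applications, so does the subset, and therefore so does the associated ordered tuple.

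Concretely, I would first observe that for an ordered tuple $I = (i_1, \ldots, i_k)$, the definition in \eqref{D:sigma_action_ktuples} yields the same underlying set as $\{\sigma i_1, \ldots, \sigma i_k\}$: when $i_k < n$ this is evident from the formula; when $i_k = n$ the first branch shows that the new tuple consists of $\sigma n = 1$ together with $i_1 + 1, \ldots, i_{k-1} + 1$, which is again the elementwise image. In both cases the output of $\sigma$ is merely the unique ordered rearrangement of the elementwise-shifted set, so $\sigma$ factors as ``shift each element, then sort.''

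Next I would iterate. Since $\sigma^n$ on $[n]$ is the identity permutation, applying the elementwise shift $n$ times to any subset returns the same subset. Sorting a subset that is already the underlying set of an ordered tuple $I$ reproduces $I$ exactly, because ordered tuples are by definition sorted. Hence $\sigma^n I = I$ for every ordered $k$-tuple $I$.

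The only potential obstacle is the bookkeeping that conflates two roles of ``sorting'' — as a reordering permutation that depends on the argument, versus as a map from subsets back to ordered tuples — which is what makes a direct iterative computation of $\sigma^n$ on the tuple $(i_1, \ldots, i_k)$ look tangled. Passing through the subset picture avoids this entirely: the sort step is invisible at the subset level, so the only genuine content is that the shift $i \mapsto \sigma i$ on $[n]$ has order $n$.
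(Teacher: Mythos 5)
Your argument is correct, and it rests on the same key observation as the paper's proof: the action \eqref{D:sigma_action_ktuples} is ``apply the shift \eqref{D:sigma_action_permutation} to each entry, then sort.'' Where the two diverge is in how the $n$-fold iteration is handled. The paper stays at the level of tuples, writes $\sigma = sort \cdot \sigma_{ew}$, and then commutes the argument-dependent sorting permutations $p_k$ past $\sigma_{ew}$, so that $\sigma^n I$ collapses to $sort \, I = I$; the one nontrivial ingredient there is that every coordinate permutation commutes with the elementwise map $\sigma_{ew}$. You instead conjugate by the bijection between ordered $k$-tuples and $k$-element subsets of $[n]$: at the subset level the sorting step is invisible, $\sigma$ becomes the elementwise shift, and $\sigma^n = \mathrm{id}$ follows at once from the fact that the shift on $[n]$ has order $n$. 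Your route buys a cleaner proof --- no bookkeeping of the permutations $p_k$ and no commutation step --- at the small cost of making the tuple--subset identification explicit, which you do verify by checking both branches of \eqref{D:sigma_action_ktuples}. One minor slip: in the case $i_k = n$ it is the \emph{second} branch of \eqref{D:sigma_action_ktuples}, not the first, that produces $(1, i_1+1, \dots, i_{k-1}+1)$; the computation you describe is nevertheless the correct one.
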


It is known that the Plucker coordinates satisfy the set of quadratic equations,
called the Plucker relations,
\begin{equation}
\label{D:plucker_identity}
    \PluckerEq{i}{j}{k}{l},
\end{equation}
where $(i,j,k,l) \in \binom{[n]}{4}$; see \cite{miller_sturmfels} Eq. 14.3.
We refer to $\binom{[n]}{4}$ as the \emph{index set} of the Plucker relations.
\begin{remark}
A short self-contained introduction to the Grassmannian is given in \cite{smirnov}, where the equation \eqref{D:plucker_identity} appears in Proposition 2.10.
\end{remark}

For $k=2$, the map $\sigma$ acts on the index set of Plucker coordinates $\binom{[n]}{2}$, and for $k=4$, on the index set of Plucker relations. Our next proposition shows that these two actions are consistent with each other. 

\newcommand*{\PluckerEqShifted}[4]{\PluckerEqTuples{\sigma(#1,#3)}{\sigma(#2,#4)}{\sigma(#1,#2)}{\sigma(#3,#4)}{\sigma(#1,#4)}{\sigma(#2,#3)}}
\newcommand*{\PluckerEqShiftedPower}[4]{\PluckerEqTuples{\sigma^m(#1,#3)}{\sigma^m(#2,#4)}{\sigma^m(#1,#2)}{\sigma^m(#3,#4)}{\sigma^m(#1,#4)}{\sigma^m(#2,#3)}}
\begin{proposition}
\label{P:sigma_action_consistent}
For an ordered $4$-tuple $(i,j,k,l)$, the Plucker relation for the shifted $k$-tuple $\sigma (i,j,k,l)$,
\emph{up to the order} of the terms in the right-hand side, is given by
\begin{equation}
\label{E:PluckerEqShifted}
    \PluckerEqShifted{i}{j}{k}{l}.
\end{equation}
\begin{remark}
In \cite{postnikov} Section 6, Postnikov provides a cyclically invariant definition of the Positive Grassmannian.
Our proposition, and its generalization to the general Grassmannian, naturally follow from his results. For our elementary case of $\Grp$, though, a proof based on an honest computation will do.
\end{remark}
\begin{proof}
\newcommand{\p}[1]{#1^\prime}
\newcommand{\monom}[4]{\D{(#1,#2)} \D{(#3,#4)}}
Let $(\p{i},\p{j},\p{k},\p{l}) = \sigma (i,j,k,l)$. Consider the Plucker relation for $(\p{i},\p{j},\p{k},\p{l})$:
\begin{equation}
    \label{E:plucker_identity_prime}
    \PluckerEq{\p{i}}{\p{j}}{\p{k}}{\p{l}},
\end{equation}
and compute its three monomials.
\newcommand{\monoshift}[4]{\D{\p{#1},\p{#2}} \D{\p{#3},\p{#4}} = \D{#1+1,#2+1} \D{#3+1,#4+1} = \D{\sigma (#1,#2)} \D{\sigma (#3,#4)}}
Consider two cases:
\begin{itemize}
    \item $l < n$, then $(\p{i},\p{j},\p{k},\p{l}) = (i,j,k,l) + 1$.
    Since the relative order of $\p{i},\p{j},\p{k},\p{l}$ didn't change,
    for each of the monomials, from left to right, we have
    $$ \monoshift{i}{k}{j}{l},$$
    $$ \monoshift{i}{j}{k}{l},$$
    $$ \monoshift{i}{l}{j}{k}.$$
    
    \item $l = n$, then $(\p{i},\p{j},\p{k},\p{l}) = (1,i+1,j+1,k+1)$.
    The left-hand side monomial remains invariant:
    $$\D{\p{i},\p{k}} \D{\p{j},\p{l}} = \D{1,j+1} \D{i+1,k+1} = \D{\sigma(j,n)} \D{\sigma (i,k)} =  \D{\sigma (i,k)} \D{\sigma(j,l)},$$
    while the two monomials on the right-hand side swap places:
    \begin{equation*}
        \D{\p{i},\p{j}} \D{\p{k},\p{l}} = \D{1,i+1} \D{j+1,k+1}
        = \D{\sigma(i,n)} \D{\sigma (j,k)} = \D{\sigma(i,l)} \D{\sigma (j,k)},
    \end{equation*}
    \begin{equation*}
        \D{\p{i},\p{l}} \D{\p{j},\p{k}} = \D{1,k+1} \D{i+1,j+1}
        = \D{\sigma(k,n)} \D{\sigma (i,j)} = \D{\sigma(k,l)} \D{\sigma (i,j)}.
    \end{equation*}
\end{itemize}
In any case, when we substitute the above expressions for the primed monomials into the Plucker relation \eqref{E:plucker_identity_prime}, we get \eqref{E:PluckerEqShifted}.
\end{proof}
\end{proposition}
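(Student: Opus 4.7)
The plan is to introduce the primed tuple $(i',j',k',l') = \sigma(i,j,k,l)$, write down the Plücker relation \eqref{D:plucker_identity} for the primed tuple, and then rewrite each of its three monomials as $\D{\sigma(a,b)}\D{\sigma(c,d)}$ for the appropriate pairs $(a,b),(c,d)$ taken from the unshifted $4$-tuple. Since the definition \eqref{D:sigma_action_ktuples} of $\sigma$ on pairs branches on whether the second entry equals $n$, the natural approach is a case split on whether $l<n$ or $l=n$.

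In the first case, $l<n$, the map $\sigma$ acts on $(i,j,k,l)$ simply by adding one coordinatewise, and the relative order of the entries is preserved. Consequently, for each unordered pair appearing inside a $\D{\cdot,\cdot}$, the second entry is still strictly less than $n$, so $\sigma$ acts on that pair by the same additive rule. The primed Plücker relation then matches \eqref{E:PluckerEqShifted} monomial by monomial, with no reordering necessary.

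The second case, $l=n$, is the one that requires some care. Here $\sigma(i,j,k,l) = (1,i+1,j+1,k+1)$, so a pair $(\cdot,l) = (\cdot,n)$ from the original tuple becomes a pair whose $\sigma$-image is $(1,\cdot+1)$, i.e.\ the wrap-around branch of \eqref{D:sigma_action_ktuples} kicks in. I would compute each of the three primed monomials $\D{i',k'}\D{j',l'}$, $\D{i',j'}\D{k',l'}$, $\D{i',l'}\D{j',k'}$ by substituting the explicit values of the primed indices, then recognize factors of the form $\D{1,m+1}$ as $\D{\sigma(m,n)} = \D{\sigma(m,l)}$. The outcome is that the left-hand monomial reassembles as $\D{\sigma(i,k)}\D{\sigma(j,l)}$, while the two right-hand monomials reassemble as $\D{\sigma(i,l)}\D{\sigma(j,k)}$ and $\D{\sigma(i,j)}\D{\sigma(k,l)}$, but with their roles swapped compared to the literal form of \eqref{E:PluckerEqShifted}. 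Since the right-hand side of a Plücker relation is a sum, this swap is precisely what the phrase \emph{up to the order} in the statement tolerates.

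The only real obstacle is the wrap-around bookkeeping in the second case: one must correctly identify each $\D{1,\cdot}$ factor as the $\sigma$-image of the right original pair, which amounts to noticing which of $i,j,k$ is paired with $l=n$ in each of the three original monomials. Once that identification is made, there is no algebra left and the proposition follows immediately by substitution into \eqref{E:plucker_identity_prime}.
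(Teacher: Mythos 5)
Your proposal is correct and follows essentially the same route as the paper's own proof: the same case split on $l<n$ versus $l=n$, the same identification of $\D{1,m+1}$ factors as $\D{\sigma(m,l)}$ in the wrap-around case, and the same observation that the two right-hand monomials merely swap places, which the phrase \emph{up to the order} absorbs. Nothing further is needed.
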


\begin{corollary}
\label{C:plucker_shifted_power}
Let $(i,j,k,l) \in \binom{[n]}{4}$. The identity
\begin{equation}
\PluckerEqShiftedPower{i}{j}{k}{l}
\end{equation}
holds for all integer $m$.
\end{corollary}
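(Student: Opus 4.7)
The plan is a straightforward induction on $m$, using Proposition \ref{P:sigma_action_consistent} as the base case ($m=1$) and the group-action property from Proposition \ref{P:sigma_group_action} to extend to all integers (not just positive ones).

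First I would handle $m \ge 1$ by induction. The base case $m=1$ is exactly Proposition \ref{P:sigma_action_consistent}. For the inductive step, assume the identity
$$\PluckerEqShiftedPower{i}{j}{k}{l}$$
holds, and apply Proposition \ref{P:sigma_action_consistent} to the ordered $4$-tuple $\sigma^m(i,j,k,l)$: because $\sigma$ sends ordered $4$-tuples to ordered $4$-tuples (noted right after \eqref{D:sigma_action_ktuples}), the proposition applies verbatim with the unprimed tuple replaced by $\sigma^m(i,j,k,l)$, yielding the relation for $\sigma^{m+1}(i,j,k,l)$. Up to a (harmless) reordering of the two monomials on the right-hand side, this is the claim for $m+1$.

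Next I would extend to all integers by invoking Proposition \ref{P:sigma_group_action}, which says $\sigma^n = \mathrm{id}$. Writing any integer $m$ as $m = qn + r$ with $0 \le r < n$, the $4$-tuple $\sigma^m(i,j,k,l)$ equals $\sigma^r(i,j,k,l)$, so the identity for $m$ reduces to the already-established identity for $r \in \{0,1,\dots,n-1\}$, where the case $r=0$ is the original Plucker relation \eqref{D:plucker_identity}.

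There is no real obstacle; the only point deserving a second of care is that Proposition \ref{P:sigma_action_consistent} stated the shifted relation only \emph{up to the order} of the two right-hand terms, so when chaining it across successive powers of $\sigma$ one should remember that equality of the sums is unaffected by transposing the two summands. Everything else is bookkeeping.
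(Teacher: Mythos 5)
Your proof is correct and is exactly the intended argument: the paper states this corollary without proof, treating it as the immediate iteration of Proposition \ref{P:sigma_action_consistent} together with $\sigma^n = \mathrm{id}$ from Proposition \ref{P:sigma_group_action}, which is precisely your induction plus reduction of $m$ modulo $n$. The one refinement worth making explicit is that the induction hypothesis should be the structural statement that the displayed identity \emph{is} (up to the order of the two right-hand monomials) the Plucker relation for the ordered tuple $\sigma^m(i,j,k,l)$; the numerical identity alone does not tell you that the sub-pairs of $\sigma^m(i,j,k,l)$ coincide with the $\sigma^m$-images of the sub-pairs of $(i,j,k,l)$, and it is this pair-level correspondence, carried along the induction, that lets you read the proposition's output for the tuple $\sigma^m(i,j,k,l)$ as the claim for $m+1$.
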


\begin{diary}
\begin{example}
For $n=4$, there is just one Plucker relation. The identities of Corollary \ref{C:plucker_shifted_power}
present it in four different ways:
\begin{gather*}
    \D{1,3} \D{2,4} = \D{1,2} \D{3,4} + \D{1,4} \D{2,3}, \\
    \D{2,4} \D{1,3} = \D{2,3} \D{1,4} + \D{1,2} \D{3,4}, \\
    \D{1,3} \D{2,4} = \D{3,4} \D{1,2} + \D{2,3} \D{1,4}, \\
    \D{2,4} \D{1,3} = \D{2,3} \D{1,4} + \D{1,2} \D{3,4}.
\end{gather*}
Note how the order of the terms $\D{1,2} \D{3,4}$ and $\D{1,4} \D{2,3}$ alternates from line to line.
This alternation plays a crucial role in the proof of Lemma \ref{L:geomeanJKL}.
\end{example}
\end{diary}

The $\sigma$-action \eqref{D:sigma_action_ktuples} on $\binom{[n]}{2}$ stratifies the set of Plucker coordinates into orbits. By Proposition \ref{P:sigma_group_action}, the multiset
\begin{equation}
\label{D:O_k}
    O_k = \set{\sigma^m (1, k+1), m \in [0,n-1]},
\end{equation}
is $\sigma$-invariant, where $k \in [n-1]$, and where $[a,b]$ denotes the range of integers from $a$ to $b$. We shall refer to the multiset $O_k$ as the \emph{$k$-th orbit} of the Plucker coordinates.

Let
\begin{equation}
\label{D:d}
    d = \floor{n/2}.
\end{equation}
We prove there are exactly $d$ distinct orbits. The \emph{outer} orbits $O_1$ and $O_d$ play a special role: for the extremal points of the loss function $E(x)$, they house the smallest and the largest minors respectively; see Theorem \ref{T:main}.

\newcommand{\geomean}[1]{\left(\prod_{m=0}^{n-1} #1 \right)^{1/n}}

For a point $x \in \Grp$, let $D_k$ denote the \emph{geometric mean} of its coordinates $\D{i,j}$ over the $k$-th orbit:
\begin{equation}
\label{D:D_k}
    D_k = \geomean{\D{\sigma^m(1,k+1)}}.
\end{equation}

Let
\begin{equation*}
    k \rightarrow \bar{k} = n - k
\end{equation*}
be an involution of $[n-1]$.

\begin{lemma} 
\label{L:orbits_structure}
\hfill
\begin{enumerate}
    \item\label{I:orbits_involution}
    The orbits $O_k$, and the geometric means $D_k$ are invariant under the involution:
    $$O_{\bar{k}} = O_k \text{, and } D_{\bar{k}} = D_k.$$
    \item\label{I:orbits_are_distinct} The orbits $O_k$ are distinct for $k \in [d]$.
    \item\label{I:orbits_involution_union} If $(i,j) \in O_k$, then either $j-i=k$, or $j-i=\bar{k}$.
\end{enumerate}
\end{lemma}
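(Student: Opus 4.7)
The plan is to track how a single application of $\sigma$ acts on the ``difference'' $j - i$ of an ordered pair $(i,j) \in \binom{[n]}{2}$. From definition \eqref{D:sigma_action_ktuples}: if $j < n$ then $\sigma(i,j) = (i+1, j+1)$, preserving the difference; if $j = n$ then $\sigma(i,n) = (1, i+1)$, carrying the difference $n - i$ to $i = n - (n-i)$. Thus one shift either fixes $j - i$ or swaps it with $n - (j-i)$. Applied iteratively to the generator $(1, k+1)$ of $O_k$ (whose difference is $k$), this proves part (\ref{I:orbits_involution_union}): every $(i,j) \in O_k$ satisfies $j - i \in \{k, \bar k\}$.

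For part (\ref{I:orbits_involution}) I would trace the orbit of $(1, k+1)$ explicitly. The first $n-k-1$ applications of $\sigma$ use only the $j < n$ rule and carry $(1, k+1)$ step by step to $(n-k, n)$, which still has difference $k$. The $(n-k)$-th application triggers the wrap-around, yielding $\sigma^{n-k}(1, k+1) = \sigma(n-k, n) = (1, n-k+1) = (1, \bar k + 1)$. Hence $(1, \bar k + 1) \in O_k$, so the orbit $O_{\bar k}$, being the $\sigma$-orbit of $(1, \bar k + 1)$, coincides with $O_k$ as multisets, and equality of the orbits immediately gives $D_k = D_{\bar k}$.

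For part (\ref{I:orbits_are_distinct}), suppose $O_k = O_{k'}$ for some $k, k' \in [d]$. Then $(1, k+1) \in O_{k'}$, so by part (\ref{I:orbits_involution_union}) either $k = k'$ or $k = n - k'$. The second alternative forces $k + k' = n$ with $k, k' \leq d = \floor{n/2}$: for odd $n = 2d+1$ this is impossible, and for even $n = 2d$ it forces $k = k' = d$. Either way $k = k'$, so the orbits $O_1, \dots, O_d$ are pairwise distinct.

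No serious obstacle is anticipated: the entire statement reduces to direct inspection of the piecewise definition of $\sigma$, with Proposition \ref{P:sigma_group_action} providing the crucial fact that the orbit closes after exactly $n$ steps. The only bookkeeping subtlety is keeping track of the two ``phases'' of the orbit (the $n-k$ iterates of difference $k$, followed by the $k$ iterates of difference $\bar k$), which together account for all $n$ multiset elements.
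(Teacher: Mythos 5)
Your proof is correct, and for item (\ref{I:orbits_involution}) it is essentially the paper's argument (the paper also observes that $(n-k,n)\in O_k$ and that $\sigma(n-k,n)=(1,\bar k+1)$, then invokes the closure of the orbit). Where you diverge is in items (\ref{I:orbits_are_distinct}) and (\ref{I:orbits_involution_union}): the paper introduces the trigonometric orbit invariant $s(i,j)=\sin\bigl((j-i)\tfrac{\pi}{n}\bigr)$, proves distinctness of the orbits from the strict monotonicity of $\sin$ on $[0,\pi/2]$, and recovers the dichotomy $j-i\in\{k,\bar k\}$ from the fact that $\sin x=\sin\phi$ has exactly two solutions in $[0,\pi]$; you instead track the difference $j-i$ directly through the piecewise definition \eqref{D:sigma_action_ktuples} (one shift either preserves the difference or replaces it by its complement $n-(j-i)$), prove item (\ref{I:orbits_involution_union}) first, and then deduce item (\ref{I:orbits_are_distinct}) by the arithmetic observation that $k+k'=n$ with $k,k'\le d$ forces $k=k'=d$ (even $n$) or is impossible (odd $n$). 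Your route is more elementary and arguably cleaner logically: the paper's item-(\ref{I:orbits_are_distinct}) argument quietly relies on the $\sigma$-invariance of $s(i,j)$, which itself rests on exactly the difference-tracking you make explicit. What the paper's choice buys is an early appearance of the quantities $s_k=\sin k\tfrac{\pi}{n}$, which serve as the normalization constants and the Plucker coordinates of $C$ later on, so the invariant foreshadows the rest of the argument; mathematically, though, nothing is lost in your version, and Proposition \ref{P:sigma_group_action} is invoked at the same single point (closure of the orbit after $n$ steps) in both proofs.
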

\begin{proof}
For item \eqref{I:orbits_involution}, fix $k$ and consider the index $I = (n-k,n)$.
By the definition of $O_k$, the index $I$ belongs to $O_k$.
Now apply the map $\sigma$ to the index $I$:
$$\sigma I = (1, (n-k) + 1) = (1, \bar{k} + 1).$$
By the same definition, the index $\sigma I \in O_{\bar{k}}$.
Since, by the definition of the orbit, the indexes $I$ and its image $\sigma I$ belong to the same orbit, the orbits $O_k$ and $O_{\bar{k}}$ must coincide.
The second equality $D_{\bar{k}} = D_k$ follows immediately from $O_{\bar{k}} = O_k$.

For item \eqref{I:orbits_are_distinct},
consider the function
\newcommand{\s}[1]{\sin #1 \frac{\pi}{n}}
\begin{equation}
    s(i,j) = \s{(j-i)}.
\end{equation}
Since $\s{k} = \s{\bar{k}}$, the function $s(i,j)$ is invariant over each orbit $O_k$.
Since $\sin(x)$ is monotonic on the interval $[0,\pi/2]$, the numbers $\s{1}, \dots, \s{d}$ are distinct, and therefor their respective orbits are distinct too.

For item \eqref{I:orbits_involution_union}, fix $k \in [d]$.
Since the function $s(i,j)$ is $\sigma$-invariant,
\begin{equation*}
    \s{(j-i)} = s(i,j) = s(1,k+1) = \s{k},
\end{equation*}
for all $(i,j) \in O_k$.
Since the equation $\sin x = \sin \phi$ has exactly two solutions $x=\phi$ and $x=\pi - \phi$ on the interval $[0,\pi]$,
\begin{equation*}
    (j-i)\frac{\pi}{n} = \text{ either } k \frac{\pi}{n} \text{,  or  } \pi - k \frac{\pi}{n}.
\end{equation*}
The first case of the identity implies $j-i=k$, while the second $j-i=\bar{k}$.

\end{proof}

Let
\begin{equation}
\label{D:s_k}
    s_k = \sin k \frac{\pi}{n}.
\end{equation}
For a point $x \in \Grp$, the coordinates $\D{i,j}$ are defined up to a common scaling factor. We now wish to make use of this degree of freedom and, \emph{from this point on}, we assume that $\D{i,j}$ are scaled so that
\begin{equation}
\label{E:normalization_condition_D_k}
    D_1 = D_{n-1} = s_1,
\end{equation}
which is possible since $D_1 = D_{n-1}$ by Proposition \ref{L:orbits_structure}, item \eqref{I:orbits_involution}.
We refer to \eqref{E:normalization_condition_D_k} as the \emph{normalization condition}. Note the cyclic matrix $C$ satisfies the normalization condition.

The following proposition lists a number of special properties of the cyclic matrix $C$. These properties ensure that $C$ satisfies \emph{exactly} certain inequalities we develop. And this exactness, in turn, will lead to optimality.

\begin{proposition}
\label{P:properties_of_C}
The Plucker coordinates $\D{i,j}$ of $C$, and their geometric means $D_k$ satisfy the following properties.
\begin{enumerate}
    \item 
Coordinates $\D{i,j}$ are constant over the $\sigma$-orbits and, therefor, are equal to their geometric means:
\begin{equation*}
\label{I:dk_C}
        \D{\sigma^m(1,k+1)} = s_k = D_k,
\end{equation*}
for all $k \in [n-1]$, and $m \in [0,n-1]$.
\newcommand{\p}{\frac{s_j s_{l-k}}{s_k s_{l-j}}}
\newcommand{\q}{\frac{s_l s_{k-j}}{s_k s_{l-j}}}
\item The weights property:
\begin{equation*}
\label{p_and_q}
    \p + \q = 1,
\end{equation*}
for all $(j,k,l) \in \binom{[n-1]}{3}$.
\item
\label{I:EC_sd_over_s1}
The sequence $s_k$ is strictly monotonically increasing for $k \in [d]$, and thus
\begin{gather*}
    \max \D{i,j} = s_d, \\
    \min \D{i,j} = s_1, \\
    E(C) = \frac{s_d}{s_1}.
\end{gather*}
\end{enumerate}

\begin{proof} Item by item:
\begin{enumerate}
    \item Fix $(i,j) \in O_k$, and compute $\D{i,j}$. By basic trigonometry,
    \begin{align*}
        \D{i,j} &= \begin{vmatrix}
        \cosk i & \cosk j \\
        \sink i & \sink j
        \end{vmatrix}
        = \sink{(j-i)} \\
        &= \sink{k} && \text{, by Lemma \ref{L:orbits_structure} item \eqref{I:orbits_involution_union}}.
    \end{align*}
    Since $\D{i,j}$ are constant over $O_k$, their geometric mean $D_k$ equals to the constant.
    
    \item Fix $(j,k,l) \in \binom{[n-1]}{3}$, and consider the $4$-tuple $I = (1,j+1,k+1,l+1)$.
    Substitute $\D{a,b} = s_{b-a}$ for each Plucker coordinate in the Plucker identity
    \eqref{D:plucker_identity} for the $4$-tuple $I$:
    \begin{equation*}
        s_{(k+1) - 1} s_{(l+1) - (j+1)} = s_{(j+1) - 1} s_{(l+1) - (k+1)} + s_{(l+1) - 1} s_{(k+1) - (j+1)},
    \end{equation*}
    which simplifies to the identity 
    \begin{equation*}
        s_k s_{l-j} = s_j s_{l-k} + s_l s_{k-j}.
    \end{equation*}
    Divide each side by $s_k s_{l-j}$ to get the weights property.
    
    \item This item is self-evident.
\end{enumerate}
\end{proof}
\end{proposition}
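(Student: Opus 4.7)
The three items are all essentially computational, so my plan is to reduce each to a direct verification using the orbit structure already established in Lemma~\ref{L:orbits_structure}. For item (1), I would evaluate the $2\times 2$ minor of $C$ with columns indexed by $i<j$ via the sine subtraction formula:
\begin{equation*}
\D{i,j} = \cosk{i}\sink{j} - \cosk{j}\sink{i} = \sink{(j-i)}.
\end{equation*}
By Lemma~\ref{L:orbits_structure} item~(\ref{I:orbits_involution_union}), any $(i,j) \in O_k$ satisfies $j-i \in \{k,\bar{k}\}$, and since $\sin(k\pi/n) = \sin((n-k)\pi/n)$ we obtain $\D{i,j} = s_k$ on all of $O_k$. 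Because the values are constant on the orbit, their geometric mean $D_k$ is trivially the same constant.

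For item (2), I would instantiate the Plucker relation \eqref{D:plucker_identity} at the $4$-tuple $(1,\,j+1,\,k+1,\,l+1)$, which is a valid element of $\binom{[n]}{4}$ whenever $(j,k,l) \in \binom{[n-1]}{3}$, and substitute the closed form $\D{a,b} = s_{b-a}$ from item~(1). This produces the trigonometric identity $s_k s_{l-j} = s_j s_{l-k} + s_l s_{k-j}$, and dividing both sides by $s_k s_{l-j}$ (positive, since all indices lie in $[1,n-1]$) yields the weights property.

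For item (3), I would use the strict monotonicity of $\sin$ on $[0,\pi/2]$: for $k \in [d]$ we have $k\pi/n \le d\pi/n \le \pi/2$, so $s_1 < s_2 < \cdots < s_d$. Lemma~\ref{L:orbits_structure} items (\ref{I:orbits_involution}) and (\ref{I:orbits_are_distinct}) say that the orbits $O_1,\dots,O_d$ exhaust all Plucker indices, and item~(1) of the present proposition gives $\D{i,j} = s_k$ on $O_k$. Combining these immediately yields $\max \D{i,j} = s_d$, $\min \D{i,j} = s_1$, and $E(C) = s_d/s_1$.

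There is no real obstacle here — all the structural content (cyclic invariance, orbit decomposition, the involution $k \leftrightarrow \bar{k}$) was already absorbed into Lemma~\ref{L:orbits_structure}, so only a trigonometric verification remains. The only place requiring mild care is matching the indexing in the Plucker relation for $(1,j+1,k+1,l+1)$ to the exact form required in item~(2); once the indices are spelled out, the substitution is mechanical.
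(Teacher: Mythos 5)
Your proposal is correct and follows essentially the same route as the paper: the determinant computation plus Lemma \ref{L:orbits_structure} for item (1), instantiating the Plucker relation at $(1,j+1,k+1,l+1)$ and dividing for item (2), and monotonicity of $\sin$ on $[0,\pi/2]$ for item (3), which the paper simply declares self-evident. Your only additions are to make explicit the identity $s_k = s_{\bar{k}}$ and the positivity needed for the division, both of which the paper leaves implicit.
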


Our next lemma is the main reductive step in our proof; it says that the geometric averaging (\ref{D:D_k}) morphs the system (\ref{D:plucker_identity}) of quadratic identities for $\D{i,j}$ into a similar system of quadratic \emph{inequalities} for $D_k$.
\begin{lemma}
For $x \in \Grp$, the geometric means $D_k$ satisfy
\newcommand{\PluckerInq}[3]{D_{#2} D_{#3-#1} \geq D_{#1} D_{#3-#2} + D_#3 D_{#2-#1}}
\label{L:geomeanJKL}
\begin{equation}
\label{E:geomeanJKL}
    \PluckerInq{j}{k}{l},
\end{equation}
for all $(j, k, l) \in \binom{[n-1]}{3}$.

\begin{proof}
Fix $(j, k, l) \in \binom{[n-1]}{3}$ and consider the 4-tuple $(1, j+1, k+1, l+1)$.
By Corollary \ref{C:plucker_shifted_power}, the identities
\begin{equation}
    \PluckerEqShiftedPower{1}{j+1}{k+1}{l+1}
\end{equation}
hold for all $m \in [0,n-1]$.
Take the geometric mean of each side of the above $n$ identities:
\begin{equation*}
    \geomean{\D{\sigma^m(1,k+1)} \D{\sigma^m(j+1,l+1)}}
    = \geomean{\left(\D{\sigma^m(1,j+1)} \D{\sigma^m(k+1,l+1)} + \D{\sigma^m(1,l+1)} \D{\sigma^m(j+1,k+1)}\right)}.
\end{equation*}
For the left-hand side,
\begin{equation*}
    \geomean{\D{\sigma^m(1,k+1)} \D{\sigma^m(j+1,l+1)}}
    = \geomean{\D{\sigma^m(1,k+1)}} \geomean{\D{\sigma^m(j+1,l+1)}}
    = D_k D_{l-j},
\end{equation*}
since the index set in the second product,
\begin{equation*}
    \set{\sigma^m (j+1, l+1) \text{, for } m \in [0,n-1]},
\end{equation*}
is $\sigma$-invariant by Proposition \ref{P:sigma_group_action}, and therefor equals to $O_{l-j}$.
For the right-hand side,
apply the superadditivity inequality
\begin{equation*}
    \geomean{(a_m + b_m)} \geq \geomean{a_m} + \geomean{b_m}
\end{equation*}
(\cite{steele}, Exercise 2.1) to the right-hand side:
\begin{multline*}
    \geomean{\left(\D{\sigma^m(1,j+1)} \D{\sigma^m(k+1,l+1)} + \D{\sigma^m(1,l+1)} \D{\sigma^m(j+1,k+1)}\right)} \\
    \geq \geomean{\D{\sigma^m(1,j+1)} \D{\sigma^m(k+1,l+1)}}
    + \geomean{\D{\sigma^m(1,l+1)} \D{\sigma^m(j+1,k+1)}} \\
    = \geomean{\D{\sigma^m(1,j+1)}} \geomean{\D{\sigma^m(k+1,l+1)}} \\
    + \geomean{\D{\sigma^m(1,l+1)}} \geomean{\D{\sigma^m(j+1,k+1)}} \\
    = D_j D_{l-k} + D_l D_{k-j}.
\end{multline*}
\end{proof}
\end{lemma}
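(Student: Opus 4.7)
The plan is to promote the $n$ shifted versions of a single Plucker identity into one inequality on geometric means, using multiplicativity of the geometric mean on each side and superadditivity on the right. I start from the Plucker relation for the 4-tuple $(1,j+1,k+1,l+1)$ and, via Corollary \ref{C:plucker_shifted_power}, I obtain $n$ identities indexed by $m \in [0,n-1]$, one for each cyclic shift $\sigma^m$.

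Taking the $n$-th root of the product of the left-hand sides factors into two independent geometric means, one over $\{\sigma^m(1,k+1)\}$ and one over $\{\sigma^m(j+1,l+1)\}$. By Proposition \ref{P:sigma_group_action} each of these multisets is $\sigma$-invariant of size $n$ and therefore coincides, as a multiset, with an orbit; the two orbits are $O_k$ and $O_{l-j}$, so the left-hand geometric mean is exactly $D_k\,D_{l-j}$.

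For the right-hand side, each of the $n$ terms is a sum of two positive quantities, so I invoke the superadditivity of the geometric mean, $\bigl(\prod_m(a_m+b_m)\bigr)^{1/n} \ge \bigl(\prod_m a_m\bigr)^{1/n} + \bigl(\prod_m b_m\bigr)^{1/n}$ for positive sequences (a standard consequence of Minkowski/AM--GM, e.g.\ \cite{steele}, Exercise 2.1). Each of the resulting two products then factors by the same orbit argument into $D_j\,D_{l-k}$ and $D_l\,D_{k-j}$. Combining the exact identity on the left with the lower bound on the right yields the claimed inequality.

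The only subtle point is that Corollary \ref{C:plucker_shifted_power} guarantees the shifted Plucker identity only \emph{up to the order} of the two right-hand side monomials. Since the right-hand side is a sum, this reordering is harmless for the superadditivity step, and in particular the pairing of orbits $(O_j,O_{l-k})$ and $(O_l,O_{k-j})$ is preserved across the $n$ shifts. I expect this bookkeeping, together with invoking positivity on $\Grp$ to justify all the geometric means, to be the most delicate part; once it is in place, the argument is a mechanical application of multiplicativity and superadditivity of the geometric mean.
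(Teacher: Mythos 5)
Your proposal is correct and follows essentially the same route as the paper's own proof: the $n$ shifted Plucker identities from Corollary \ref{C:plucker_shifted_power}, multiplicativity of the geometric mean and orbit identification on the left, and the superadditivity inequality (\cite{steele}, Exercise 2.1) on the right. Your remark that the possible swap of the two right-hand monomials is harmless because the sum is commutative is exactly the bookkeeping implicit in the paper's argument.
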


\begin{diary}
\begin{corollary}
For $(j,k) \in \binom{[n-1]}{2}$, $j+k \leq n$
\begin{equation}
\label{geomeanJK}
    D_k^2 \geq D_j^2 + D_{k+j} D_{k-j}
\end{equation}
\begin{proof}
Set $l=j+k$ in (\ref{E:geomeanJKL}).
\end{proof}
\end{corollary}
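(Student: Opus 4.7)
The plan is to specialize Lemma \ref{L:geomeanJKL} by identifying the third index with the sum of the first two. Given $(j,k) \in \binom{[n-1]}{2}$ with $j+k \leq n$, I would set $l := j+k$. When $j+k \leq n-1$, the triple $(j,k,l)$ lies in $\binom{[n-1]}{3}$, since $j<k$ by hypothesis and $k < j+k = l$. Substituting $l-j = k$ and $l-k = j$ into
$$D_k D_{l-j} \geq D_j D_{l-k} + D_l D_{k-j}$$
yields exactly the desired $D_k^2 \geq D_j^2 + D_{k+j} D_{k-j}$.

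The only minor subtlety is the boundary case $j+k = n$, where $l = n$ lies outside the permitted range $[n-1]$. Here the involution property $D_{n-j} = D_j$ from Lemma \ref{L:orbits_structure}, item \ref{I:orbits_involution}, forces $D_k = D_j$ (since $k = n-j = \bar{\jmath}$), so the asserted inequality reduces to $D_{k+j} D_{k-j} \leq 0$. This is a degenerate boundary; it is most naturally handled by adopting the convention $D_n = 0$, consistent with the extension $\Delta_{i,i} = 0$, or equivalently by restricting the statement to $j+k \leq n-1$, where the preceding lemma applies verbatim.

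I do not anticipate any real obstacle: the corollary is essentially a one-line reindexing of the main inequality of the previous lemma, and all the analytic content (the geometric mean superadditivity inequality applied to the Plucker relations) has already been discharged upstream.
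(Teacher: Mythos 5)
Your proposal matches the paper's proof exactly: the paper also simply sets $l = j+k$ in \eqref{E:geomeanJKL}. Your extra remark about the boundary case $j+k=n$ (where $D_n$ is not even defined and the lemma does not apply verbatim) is a legitimate point the paper glosses over, and restricting to $j+k\leq n-1$ or adopting the convention $D_n=0$ resolves it as you say.
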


\begin{corollary}
Let $d=\floor{n/2}$. The sequence $D_j$ is strictly monotonically increasing for $j = 1, \dots, d$.
\begin{proof}
Fix $j < d$, set $k=j+1$. Observe $k \leq d$ and therefor $j+k < 2d \leq n$. Apply (\ref{geomeanJK}).
\end{proof}
\end{corollary}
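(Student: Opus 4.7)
The plan is to obtain the strict monotonicity as a one-line specialization of the three-index inequality \eqref{E:geomeanJKL} already established in Lemma \ref{L:geomeanJKL}. The idea is to choose the triple $(j,k,l)$ so that the left-hand side $D_k D_{l-j}$ collapses to $D_{j+1}^2$ and one of the two right-hand terms collapses to $D_j^2$; the remaining right-hand term then supplies a strictly positive slack which forces $D_{j+1}$ to exceed $D_j$.

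Concretely, I fix $j$ with $1 \le j < d$ and apply \eqref{E:geomeanJKL} to the triple $(j,\, j+1,\, 2j+1)$. I first verify this triple lies in $\binom{[n-1]}{3}$, i.e.\ that $2j+1 \le n-1$. This holds in both parities of $n$: since $j \le d-1 = \floor{n/2}-1$, we have $2j \le 2\floor{n/2} - 2 \le n - 2$, so $2j+1 \le n-1$, and the three components are strictly increasing. Substituting $k = j+1$ and $l = 2j+1$ into \eqref{E:geomeanJKL}, so that $l-j = j+1$, $l-k = j$, and $k-j = 1$, one reads off
\begin{equation*}
    D_{j+1}^{2} \;\geq\; D_{j}^{2} + D_{2j+1}\, D_{1}.
\end{equation*}

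Because $x \in \Grp$, every Plucker coordinate $\D{i,j}(x)$ is strictly positive, so each geometric mean $D_k$ is strictly positive as well; in particular $D_{2j+1} D_{1} > 0$. The displayed inequality therefore upgrades to $D_{j+1}^{2} > D_{j}^{2}$, and taking positive square roots yields $D_{j+1} > D_{j}$, which is the claim. There is no genuine obstacle: the heavy lifting (applying the superadditivity of geometric means to every cyclic shift of a Plucker relation) is already absorbed in Lemma \ref{L:geomeanJKL}, and the only thing to check at this stage is that the chosen triple is admissible in the index set $\binom{[n-1]}{3}$.
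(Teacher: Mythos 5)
Your proof is correct and is essentially the paper's argument: the paper applies the specialization $l=j+k$ of \eqref{E:geomeanJKL} (its intermediate inequality $D_k^2 \geq D_j^2 + D_{k+j}D_{k-j}$) with $k=j+1$, which is exactly your substitution $(j,j+1,2j+1)$, and the strict positivity of the $D$'s then gives $D_{j+1}>D_j$. Your explicit checks that $2j+1\le n-1$ and that $D_{2j+1}D_1>0$ merely spell out what the paper leaves implicit.
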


\end{diary}

\begin{diary}
\begin{example}
For $n=5$ there are 4 orbits that come in pairs $O_1=O_4$ and $O_2=O_3$. Respectively their geometric means $D_k$ come in pairs $D_1=D_4$ and $D_2=D_3$. The number of distinct orbits is $d=\floor{5/2}=2$.
The sequence $D_1,D_2,D_3,D_4$ satisfy four inequalities, one for each $(j,k,l) \in \binom{[4]}{3}$, see \eqref{E:geomeanJKL},
\begin{align*}
    D_2 D_2 \geq D_1 D_1 + D_3 D_1 && (1,2,3) \\
    D_2 D_3 \geq D_1 D_2 + D_4 D_1 && (1,2,4) \\
    D_3 D_3 \geq D_1 D_1 + D_4 D_2 && (1,3,4) \\
    D_3 D_2 \geq D_2 D_1 + D_4 D_1 && (2,3,4)
\end{align*}
When we substitute $D_3=D_2$ and $D_4=D_1$, all the four inequalities collapse into a single one.
This collapse into a single equation is a little bit misleading, it simplifies matters for $n=5$, and in fact leads to a quick proof, but makes things intractable for larger $n$.
$$ D_2^2 \geq D_1^2 + D_1 D_2$$
\end{example}

\end{diary}

Let $D_k$, where $k \in [n-1]$, be a sequence of positive numbers. Define their \emph{normalized logs}
\begin{equation}
\label{D:a_k}
    a_k = \log \frac{D_k}{s_k},
\end{equation}
where $s_k$ are given by \eqref{D:s_k}.

\begin{lemma}
\label{L:linear_ineqs_a_k}
\newcommand{\p}{\frac{s_j s_{l-k}}{s_k s_{l-j}}}
\newcommand{\q}{\frac{s_l s_{k-j}}{s_k s_{l-j}}}

If $D_k$ satisfy the quadratic inequalities \eqref{E:geomeanJKL}, then $a_k$ satisfy the system of linear inequalities
\begin{equation}
\label{lin_general}
    a_k + a_{l-j} \geq \p (a_j + a_{l-k})
    + \q (a_l + a_{k-j}),
\end{equation}
for all $(j,k,l) \in \binom{[n-1]}{3}$.
\begin{proof}
Let $d_k = D_k/s_k$. Substitute $D_k=s_k d_k$ into the quadratic inequalities (\ref{E:geomeanJKL}), and divide both sides by $s_k s_{l-j}$:
\begin{equation*}
    d_k d_{l-j} \geq \p d_j d_{l-k} + \q d_l d_{k-j}.
\end{equation*}
Take the $\log$, recall the weights property (Proposition \ref{P:properties_of_C}, item \eqref{p_and_q}),
and use the concavity to get the lower bound:
\begin{multline*}
    a_k + a_{l-j} = \log d_k d_{l-j} \geq \log \prnths*{\p d_j d_{l-k} + \q d_l d_{k-j}}{} \\
    \geq \p \log (d_j d_{l-k}) + \q \log (d_l d_{k-j}) \\
    = \p (a_j + a_{l-k}) + \q (a_l + a_{k-j})
\end{multline*}
\end{proof}
\end{lemma}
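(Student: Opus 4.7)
The plan is a direct computation in three moves: a substitution to isolate the multiplicative structure, a rescaling that invokes the weights property of Proposition \ref{P:properties_of_C}, and a single application of Jensen's inequality for the concave function $\log$.

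First I would let $d_k = D_k/s_k$, so that $a_k = \log d_k$. Substituting $D_k = s_k d_k$ into the quadratic inequality \eqref{E:geomeanJKL} and then dividing both sides by the positive quantity $s_k s_{l-j}$ yields
\begin{equation*}
d_k d_{l-j} \;\geq\; \frac{s_j s_{l-k}}{s_k s_{l-j}}\, d_j d_{l-k} \;+\; \frac{s_l s_{k-j}}{s_k s_{l-j}}\, d_l d_{k-j}.
\end{equation*}
By the weights property (Proposition \ref{P:properties_of_C}, item 2), the two coefficients appearing on the right sum to $1$, so they form a convex combination.

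Next I would take the logarithm of both sides. The left side is $a_k + a_{l-j}$ directly, while the right side is the log of a convex combination of the two positive quantities $d_j d_{l-k}$ and $d_l d_{k-j}$. Applying Jensen's inequality for the concave function $\log$ gives
\begin{equation*}
\log\!\left(\frac{s_j s_{l-k}}{s_k s_{l-j}} d_j d_{l-k} + \frac{s_l s_{k-j}}{s_k s_{l-j}} d_l d_{k-j}\right)
\geq \frac{s_j s_{l-k}}{s_k s_{l-j}} \log(d_j d_{l-k}) + \frac{s_l s_{k-j}}{s_k s_{l-j}} \log(d_l d_{k-j}),
\end{equation*}
and since $\log(d_j d_{l-k}) = a_j + a_{l-k}$ and $\log(d_l d_{k-j}) = a_l + a_{k-j}$, this rearranges to exactly \eqref{lin_general}.

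There is no real obstacle here; the only subtlety worth flagging is that the weights property must already be known to hold for the tuple $(j,k,l) \in \binom{[n-1]}{3}$ in question, which is precisely the range guaranteed by Proposition \ref{P:properties_of_C}. The positivity of the $D_k$ (needed to define the logs and to guarantee $d_k > 0$) is inherited from $x \in \Grp$, so every step is legal.
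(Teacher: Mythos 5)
Your proposal is correct and is essentially identical to the paper's proof: the same substitution $d_k = D_k/s_k$, the same division by $s_k s_{l-j}$, the same appeal to the weights property of Proposition \ref{P:properties_of_C}, and the same concavity (Jensen) step for $\log$. Nothing further is needed.
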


\begin{diary}
\begin{corollary}
\newcommand{\p}{\frac{s_j^2}{s_k^2}}
\newcommand{\q}{\frac{s_{k+j} s_{k-j}}{s_k^2}}
Let
\begin{gather}
p_{j,k} = \p \\
q_{j,k} = \q
\end{gather}
For $(j,k) \in \binom{[n-2]}{2}$, $j+k \leq n-1$
\begin{equation}
\label{lin_jk}
    a_k \geq p_{j,k} a_j + q_{j,k} \frac{a_{k+j} + a_{k-j}}{2}
\end{equation}
\begin{proof}
Set $l = j + k$ in (\ref{p_and_q}) and in (\ref{lin_general}).
\end{proof}
\end{corollary}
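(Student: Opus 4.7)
The plan is to derive \eqref{lin_jk} purely as a specialization of the general linear inequality \eqref{lin_general} along the diagonal $l = j+k$, with no further analytic input beyond what Lemma \ref{L:linear_ineqs_a_k} already provides. First I would check that the domains match: requiring $(j,k,l) \in \binom{[n-1]}{3}$ with $l = j+k$ is equivalent to $j < k$ together with $j+k \leq n-1$, which is precisely the stated hypothesis $(j,k) \in \binom{[n-2]}{2}$ with $j+k \leq n-1$. Lemma \ref{L:linear_ineqs_a_k} therefore applies to the triple $(j,k,j+k)$.

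Next I would substitute $l = j+k$ into \eqref{lin_general} and collect the resulting coincidences. Since $l-j = k$ and $l-k = j$, the left-hand side collapses to $a_k + a_k = 2a_k$, the factor $a_j + a_{l-k}$ on the right becomes $2a_j$, and $a_l + a_{k-j}$ becomes $a_{k+j} + a_{k-j}$. The two sine-weights reduce to $\frac{s_j s_{l-k}}{s_k s_{l-j}} = \frac{s_j^2}{s_k^2} = p_{j,k}$ and $\frac{s_l s_{k-j}}{s_k s_{l-j}} = \frac{s_{j+k} s_{k-j}}{s_k^2} = q_{j,k}$. Substituting, \eqref{lin_general} reads $2 a_k \geq 2\, p_{j,k}\, a_j + q_{j,k}(a_{k+j} + a_{k-j})$, and dividing by $2$ gives exactly \eqref{lin_jk}.

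As a consistency check, I would also apply the same substitution $l = j+k$ to the weights property of Proposition \ref{P:properties_of_C}, item \eqref{p_and_q}, obtaining $p_{j,k} + q_{j,k} = 1$. This confirms that the right-hand side of \eqref{lin_jk} is a genuine convex combination of $a_j$ and the arithmetic mean $\tfrac{1}{2}(a_{k+j} + a_{k-j})$, a form that will be convenient for later convexity-based arguments. There is no substantive obstacle here; the only bookkeeping point worth flagging is the factor of $2$ that appears symmetrically on both sides after substitution and must be cancelled, so that the coefficient of $a_j$ comes out as $p_{j,k}$ rather than $2\, p_{j,k}$ and the two symmetric terms $a_{k+j}$, $a_{k-j}$ appear as their mean rather than as a plain sum.
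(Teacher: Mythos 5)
Your proposal is correct and is essentially the paper's own proof: the paper simply sets $l = j+k$ in the weights property and in \eqref{lin_general}, and your write-up just carries out that substitution explicitly (domain check, $l-j=k$, $l-k=j$, identification of the weights, cancellation of the factor of $2$).
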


\end{diary}

\newcommand{\p}{\frac{s_{1}^2}{s_k^2}}
\newcommand{\q}{\frac{s_{k+1} s_{k-1}}{s_k^2}}

\begin{corollary}
\label{C:a_k_are_sub_concave}
If additionally $D_k$ satisfy the boundary condition  $D_1=D_{n-1}=s_1$,
then $a_k$ satisfy the boundary condition
\begin{equation}
\label{E:boundary_condition_a_k}
    a_1 = a_{n-1} = 0,
\end{equation}
and for every $k \in [2,n-2]$, there exists $0 < q_k < 1$, such that 
\begin{equation}
\label{lin_k}
    a_k \geq q_{k} \frac{a_{k+1} + a_{k-1}}{2}.
\end{equation}

\begin{proof}
Fix $k \in [2,n-2]$, and substitute $j=1$ and $l = k + 1$ in the weights property (\ref{p_and_q}) of Proposition \ref{P:properties_of_C}. The weights property simplifies to
\begin{equation}
    p_k + s_k = 1,
\end{equation}
where
\begin{align}
    p_k = \p, & \text{\phantom{and}} q_k = \q.
\end{align}
Since $p_k$ and $q_k$ are strictly positive and sum up to one, $q_k < 1$.

Substitute $j=1$ and $l = k+1$ in \eqref{lin_general}:
\begin{equation*}
    a_k + a_k \geq p_k \prnths{a_1 + a_1} + q_k \prnths{a_{k+1} + a_{k-1}}.
\end{equation*}
Divide both sides by $2$, and recall that $a_1=0$:
\begin{equation*}
    a_k \geq q_k \frac{a_{k+1} + a_{k-1}}{2}.
\end{equation*}
\end{proof}
\end{corollary}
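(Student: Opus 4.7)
The plan is to specialize Lemma \ref{L:linear_ineqs_a_k} to the $3$-tuple $(j,k,l) = (1, k, k+1)$, which belongs to $\binom{[n-1]}{3}$ exactly when $k \in [2, n-2]$. This choice is dictated by the goal of producing a discrete Laplacian-type expression in $a_{k-1}, a_k, a_{k+1}$: setting $j=1$ and $l=k+1$ collapses both indices of the first weighted pair on the right-hand side of \eqref{lin_general} to $a_1$, a boundary value that we already know to be zero.

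First I would dispatch the boundary condition. From $a_k = \log(D_k/s_k)$ combined with the hypothesis $D_1 = D_{n-1} = s_1$, both $a_1$ and $a_{n-1}$ equal $\log 1 = 0$.

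Substituting $(j,k,l) = (1,k,k+1)$ into \eqref{lin_general}, the left-hand side $a_k + a_{l-j}$ simplifies to $2 a_k$; the first weighted pair on the right becomes $a_1 + a_1 = 0$ and drops out; what remains has the form $q_k (a_{k+1} + a_{k-1})$ with $q_k = s_{k+1}s_{k-1}/s_k^2$. Dividing by two produces exactly \eqref{lin_k}.

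It remains to verify $0 < q_k < 1$. Positivity is immediate because $s_{k-1}, s_k, s_{k+1}$ are all strictly positive for $k \in [2, n-2]$. For the upper bound, running the same substitution $(j,k,l) = (1,k,k+1)$ through the weights property of Proposition \ref{P:properties_of_C} yields $p_k + q_k = 1$ with $p_k = s_1^2/s_k^2 > 0$, forcing $q_k < 1$. There is no real obstacle here: once the specialization $(j,l) = (1, k+1)$ is spotted, the entire argument is forced by routine algebra, and the only minor bookkeeping is to confirm that the constraint $(j,k,l) \in \binom{[n-1]}{3}$ pins the index range to $k \in [2, n-2]$.
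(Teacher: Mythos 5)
Your proposal is correct and essentially identical to the paper's proof: both specialize \eqref{lin_general} and the weights property to $(j,l)=(1,k+1)$, use $a_1=0$ to kill the first weighted pair, and conclude $q_k<1$ from $p_k+q_k=1$ with $p_k=s_1^2/s_k^2>0$. The only difference is cosmetic: you explicitly check the index-range constraint $k\in[2,n-2]$ and the boundary condition, which the paper treats as immediate.
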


\begin{corollary}
\label{C:a_k_are_positive}
Under the assumptions of Corollary \ref{C:a_k_are_sub_concave}
\begin{enumerate}
    \item  $a_k \geq 0$,
    \item if \emph{any} $a_k > 0$, then \emph{all} $a_k > 0$,
\end{enumerate}
for all $k \in [2,n-2]$.
\begin{proof}
\newcommand{\Czero}[1]{\R_0^{#1}}
We continue to use the notation from Corollary \ref{C:a_k_are_sub_concave}.
Let $\Czero{n-1}$ be the subspace of $\R^{n-1}$ that satisfy the boundary condition $x_1 = x_{n-1} =0$.
On $\Czero{n-1}$ define the linear operator $S: (x_k) \rightarrow (y_k)$:
\begin{align*}
    y_k &= q_k \frac{x_{k+1} + x_{k-1}}{2} \text{ , } 2 \leq k \leq n-2, \\
    y_1 &= y_{n-1} = 0.
\end{align*}
Note that if $x \geq 0$ (coordinate-wise), then $S x \geq 0$, where $x \in \Czero{n-1}$.
Therefor if $u \geq v$, for $u,v \in \Czero{n-1}$, then $S u \geq S v$.

Denote $a = (a_j)$. Since $a_1 = a_{n-1} = 0$ by Corollary \ref{C:a_k_are_sub_concave}, the vector $a$ is in $\Czero{n-1}$. We reinterpret the set of inequalities (\ref{lin_k}) as a vector inequality (coordinate-wise):
\begin{equation}
\label{s_pointwise}
    a \geq S a,
\end{equation}
and iterate it $m$ times:
\begin{equation}
\label{s_iterated}
    a \geq S^m a.
\end{equation}
By the definition of $S$, its norm $\norm{S}_{\infty}$ is strictly less than one:
\begin{equation*}
    \norm{S}_{\infty} \leq \max q_k < 1,
\end{equation*}
and therefor the right-hand side in (\ref{s_iterated}) converges to zero as $m \rightarrow \infty$. Passing to the limit, we get
\begin{equation*}
    a \geq 0.
\end{equation*}

We now prove the second part of the claim by bootstrapping the fact $a_k \geq 0$, for all $k$.
Let $a_k > 0$ for some inner index $1 < k < n-1$. By the definition of $S$, the $k \pm 1$ coordinates of $S a$ are positive  whenever $k \pm 1$ is itself an inner index; then (\ref{s_pointwise}) implies that each $a_{k \pm 1} > 0$. We can say strict positivity \emph{diffuses} to neighbouring indexes. Iterating this argument we obtain $a_k > 0$, for all inner index $k$.
\end{proof}
\end{corollary}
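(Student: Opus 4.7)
The plan is to reinterpret the system \eqref{lin_k} as a fixed-point-style inequality $a \geq Sa$ for a monotone linear operator $S$ that strictly contracts in the sup norm, and then iterate to extract both parts of the corollary.

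For part (1), I will define $S$ on the subspace of $\R^{n-1}$ consisting of vectors that vanish at indices $1$ and $n-1$: for such a vector $x$, set $(Sx)_k = q_k (x_{k+1} + x_{k-1})/2$ for $k \in [2, n-2]$ and $(Sx)_1 = (Sx)_{n-1} = 0$. Since the $q_k$ are positive, every entry of the matrix of $S$ is non-negative, so $S$ preserves the coordinate-wise order. The inequalities \eqref{lin_k} together with the boundary condition \eqref{E:boundary_condition_a_k} translate exactly into the vector inequality $a \geq Sa$, and monotonicity upgrades this to $a \geq S^m a$ for every $m \geq 0$. The essential ingredient is the contraction estimate $\norm{S}_{\infty} \leq \max_k q_k < 1$, inherited from Corollary \ref{C:a_k_are_sub_concave}, which forces $S^m a \to 0$; passing to the limit yields $a \geq 0$.

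For part (2), I will bootstrap using the non-negativity just established. Suppose $a_{k_0} > 0$ for some inner index $k_0$. If $k_0 \pm 1$ is itself an inner index, the inequality \eqref{lin_k} at $k_0 \pm 1$ reads $a_{k_0 \pm 1} \geq q_{k_0 \pm 1}(a_{k_0} + a_{k_0 \pm 2})/2$; the right-hand side is strictly positive because $a_{k_0} > 0$ while $a_{k_0 \pm 2} \geq 0$ by part (1). Strict positivity thus \emph{diffuses} from $k_0$ to its inner neighbours, and iterating this propagation covers every inner index.

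The main technical hurdle is the contraction step in part (1): one really needs $\max_k q_k$ to be strictly less than $1$, which is supplied upstream by the weights identity of Proposition \ref{P:properties_of_C}. Once part (1) is secured, the diffusion argument for part (2) is essentially immediate — the only mild care required is at indices adjacent to the boundary, where $a_1 = 0$ or $a_{n-1} = 0$ appears on the right-hand side, but this is harmless since only one of the two neighbour terms needs to be strictly positive to propagate positivity inward.
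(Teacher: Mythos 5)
Your proposal is correct and follows essentially the same route as the paper: the same monotone operator $S$ on the boundary-zero subspace, the same iteration $a \geq S^m a$ with the sup-norm contraction $\norm{S}_{\infty} \leq \max_k q_k < 1$ for part (1), and the same neighbour-to-neighbour diffusion of strict positivity for part (2). No gaps to report.
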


Let $x \in \Grp$, and let again $D_k$ be the geometric mean of $\D{i,j}$ over the $k$-th orbit; see  \eqref{D:D_k}. Define the auxiliary loss function
\begin{equation}
    L(x) = \frac{D_d}{D_1}.
\end{equation}
Our interest in $L(x)$ is due to its relation to $E(x)$.
\begin{proposition}\label{P:L_leq_E}
\hfill
\begin{enumerate}
    \item\label{I:L_weakly_smaller_than_E} The loss function $L(x)$ is weakly smaller than $E(x)$:
    \begin{equation*}
        L(x) \leq E(x),
    \end{equation*}
    \item\label{I:L_and_E_coincide_on_C} $L(x)$ and $E(x)$ coincide on $C$:
    $$E(C)=L(C) = \frac{s_d}{s _1}.$$
\end{enumerate}
\begin{proof}
For item \eqref{I:L_weakly_smaller_than_E},
\begin{equation*}
    E(x) = \frac{\max\limits_{(i,j) \in \binom{[n]}{2}} \D{i,j}}{\min\limits_{(i,j) \in \binom{[n]}{2}} \D{i,j}}
    \geq \frac{\max\limits_{(i,j) \in O_d} \D{i,j}}{\min\limits_{(i,j) \in O_1} \D{i,j}}
    \geq \frac{D_d}{D_1}
    = L(x)
\end{equation*}

Item \eqref{I:L_and_E_coincide_on_C} follows from Proposition \ref{P:properties_of_C}.

\end{proof}
\end{proposition}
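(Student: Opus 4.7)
The plan is to prove both items by essentially unpacking definitions and applying the elementary fact that the geometric mean of a finite collection of positive numbers lies between the minimum and the maximum of that collection.

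For item \eqref{I:L_weakly_smaller_than_E}, I would start from the definition of $E(x)$ as a ratio over all indices $(i,j) \in \binom{[n]}{2}$ and restrict the numerator's maximum to the subset $O_d \subset \binom{[n]}{2}$ and the denominator's minimum to the subset $O_1$. Since restricting to a subset can only decrease a maximum and only increase a minimum, this only decreases the ratio, yielding
\[
E(x) \;\geq\; \frac{\max_{(i,j)\in O_d}\D{i,j}}{\min_{(i,j)\in O_1}\D{i,j}}.
\]
Then I invoke the elementary AM-GM-type bounds $\max_{(i,j)\in O_d}\D{i,j} \geq D_d$ and $\min_{(i,j)\in O_1}\D{i,j} \leq D_1$, which follow directly from the definition \eqref{D:D_k} of the geometric mean. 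Dividing the first by the second gives $E(x) \geq D_d/D_1 = L(x)$.

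For item \eqref{I:L_and_E_coincide_on_C}, I would simply cite Proposition \ref{P:properties_of_C}. That proposition says all Plucker coordinates of $C$ are constant on each orbit and equal to $s_k$ on $O_k$, and that $s_k$ is strictly monotone for $k \in [d]$. The first fact gives $D_k(C) = s_k$, hence $L(C) = s_d/s_1$; combined with item \eqref{I:EC_sd_over_s1} of that proposition, which computes $E(C) = s_d/s_1$ directly, both sides agree.

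I expect no real obstacle here: the proposition is a transparent bookkeeping statement whose purpose is to reduce the study of $E(x)$ on $\Grp$ to the study of the more tractable auxiliary quantity $L(x)$, with equality holding at the candidate optimizer $C$. The only mild point worth stating cleanly is the choice to restrict to $O_d$ in the numerator and $O_1$ in the denominator: this is the exact pairing that will later be shown to be the right one, since the subsequent analysis (via the inequalities on the $a_k$ in Lemma \ref{L:linear_ineqs_a_k} and Corollary \ref{C:a_k_are_positive}) controls precisely the ratio $D_d/D_1$.
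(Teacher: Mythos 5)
Your proposal is correct and follows essentially the same route as the paper: restrict the maximum to $O_d$ and the minimum to $O_1$, bound them by the geometric means $D_d$ and $D_1$, and deduce item \eqref{I:L_and_E_coincide_on_C} directly from Proposition \ref{P:properties_of_C}. No issues to report.
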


\begin{theorem}
\label{T:C_minimizes_E}
The circular matrix $C$ minimizes $E(x)$ on $\Grp$.
\begin{proof}
Let $x \in \Grp$, and let $a_k = \log \frac{D_k}{s_k}$ denote the normalized logs of its geometric means $D_k$; see \eqref{D:a_k}.
By Lemma \ref{L:geomeanJKL}, the geometric means $D_k$
satisfy the inequalities \eqref{E:geomeanJKL}. Additionally $D_1 = D_{n-1} = s_1$ by the normalization condition \eqref{E:normalization_condition_D_k}. Therefor Corollary \ref{C:a_k_are_positive} applies, and $a_d \geq 0$. We now show that $a_d \geq 0$ implies $C$ minimizes $L(X)$:
\begin{equation*}
\begin{aligned}
    \log L(x) &= \log D_d - \log D_1 \\
    &= \log s_d + a_d - \log s_1 &&  \text{, since $D_1 = s_1$, see  \eqref{E:normalization_condition_D_k}.}\\
    &= \log \frac{s_d}{s_1} + a_d \\
    &\geq \log L(C) && \text{, since $a_d \geq 0$.}
\end{aligned}
\end{equation*}
By Proposition \ref{P:L_leq_E},
\begin{equation*}
    E(C) = L(C) \leq L(x) \leq E(x).
\end{equation*}

\end{proof}
\end{theorem}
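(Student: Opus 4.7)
The plan is to reduce the theorem to showing that $C$ minimizes the auxiliary loss $L(x) = D_d/D_1$, which by Proposition \ref{P:L_leq_E} satisfies $L(x) \leq E(x)$ on all of $\Grp$ and coincides with $E$ at $C$ with common value $s_d/s_1$. If I can prove $L(x) \geq L(C)$ for every $x \in \Grp$, the sandwich $E(C) = L(C) \leq L(x) \leq E(x)$ finishes the job. This move is attractive because $L$ depends only on the orbit averages $D_k$, which are more amenable to averaging-based arguments than the raw Plucker coordinates $\D{i,j}$.

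To attack $L(x) \geq L(C)$, I would first rescale so that the normalization condition \eqref{E:normalization_condition_D_k} holds, $D_1 = D_{n-1} = s_1$; this is legitimate because $L$ is scale-invariant and because $D_1 = D_{n-1}$ already holds on all of $\Grp$ by Lemma \ref{L:orbits_structure}. Under this normalization, $\log L(x) - \log L(C) = a_d$, so everything boils down to showing $a_d \geq 0$, where $a_d = \log(D_d/s_d)$ is the $d$-th normalized log from \eqref{D:a_k}.

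The final step is then to chain together the preceding structural results: Lemma \ref{L:geomeanJKL} furnishes the quadratic inequalities on the $D_k$; Lemma \ref{L:linear_ineqs_a_k}, via concavity of $\log$ together with the weights property of Proposition \ref{P:properties_of_C}, converts them into linear sub-concavity inequalities for the $a_k$; and Corollary \ref{C:a_k_are_positive}, whose contractive-averaging bootstrap uses the boundary values $a_1 = a_{n-1} = 0$ supplied by the normalization, forces $a_k \geq 0$ for every inner index, in particular $a_d$. I expect the main conceptual obstacle to have been the quadratic-to-linear passage, since the Plucker relations mix three monomials in a way that naively resists averaging; but that obstacle is already resolved upstream by the concavity-plus-weights trick, and the theorem itself is essentially the bookkeeping that glues these pieces together.
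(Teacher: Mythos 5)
Your proposal is correct and follows essentially the same route as the paper: invoke the normalization $D_1=D_{n-1}=s_1$, chain Lemma \ref{L:geomeanJKL}, Lemma \ref{L:linear_ineqs_a_k}, and Corollary \ref{C:a_k_are_positive} to get $a_d\geq 0$, hence $L(x)\geq L(C)$, and conclude with the sandwich $E(C)=L(C)\leq L(x)\leq E(x)$ from Proposition \ref{P:L_leq_E}. Your explicit remark that the rescaling is legitimate because $L$ is scale-invariant is a small clarification the paper leaves implicit, but the argument is the same.
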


\subsection{Absolute Values and The Polygon}

In their original version of the loss function \eqref{D:Ex} \cite{berman} used the absolute values of $\D{i,j}$:
\begin{equation}
    B(X) = \frac{\max \abs{\D{i,j}}}{\min \abs{\D{i,j}}},    
\end{equation}
and wanted to minimize $B(X)$ over the entire $\Gr$. It turns out the addition of the absolute values doesn't change the problem in any significant way.

\begin{proposition}
\label{P:B_E_loss_equivalence}
For every $2 \times n$ matrix $X$, there exists a matrix $Y$, such that the set of its Plucker coordinates $\set{\D{i,j}(Y)}$ coincides with the set of absolute values $\set{\abs{\D{i,j}}}$ of matrix $X$.

\begin{proof}
Consider the columns $x_i =(u_i, v_i)$ of the matrix $X$ as planar vectors. For those vectors that are not already in the upper half-plane, flip their signs to put them there. Algebraically:
\begin{equation}
    x_i = (\sign v_i) \cdot x_i.
\end{equation}
This transformation possibly flips signs of $\D{i,j}$, but doesn't change their absolute values.

Now sort the columns of $X$ counterclockwise, and denote the sorted matrix $Y$. Since the columns of $Y$ are in the same half-plane and sorted counterclockwise, all its minors $\D{i,j}(Y)$ are non-negative and, by construction, equal to the absolute values of their counterparts in $X$.
\end{proof}
\end{proposition}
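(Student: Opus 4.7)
The plan is to exploit the geometric meaning of $2 \times 2$ minors as signed areas. Viewing each column $x_i = (u_i, v_i)^T$ of $X$ as a vector in $\R^2$, the minor $\D{i,j}(X) = \det(x_i, x_j)$ equals the signed area of the parallelogram spanned by $x_i$ and $x_j$; its sign records the orientation of the ordered pair $(x_i, x_j)$, while its absolute value depends only on the unordered pair of lines through the origin together with the lengths.

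First I would put all columns into the closed upper half-plane: whenever $v_i<0$, replace $x_i$ by $-x_i$. This operation multiplies every row of the column $i$ by $-1$, so for each $j \neq i$ it flips the sign of $\D{i,j}$ but preserves $\abs{\D{i,j}}$; the minors not involving column $i$ are untouched. After processing all columns in turn, the absolute values of every minor are unchanged, and each column now has a well-defined angle $\theta_i \in [0,\pi]$ with the positive $u$-axis (taking some convention for the boundary cases).

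Next I would sort the modified columns by angle counterclockwise, calling the resulting matrix $Y$. Column permutation only reindexes the set of minors (with possible sign changes), so the multiset $\set{\abs{\D{i,j}(Y)}}$ equals the multiset $\set{\abs{\D{i,j}(X)}}$. The payoff of sorting is that for $i<j$ we have angles $\theta_i \leq \theta_j$ in $[0,\pi]$, hence
\begin{equation*}
\D{i,j}(Y) = \norm{x_i} \norm{x_j} \sin(\theta_j - \theta_i) \geq 0,
\end{equation*}
so $\D{i,j}(Y) = \abs{\D{i,j}(Y)}$. Combining the two observations, $\set{\D{i,j}(Y)} = \set{\abs{\D{i,j}(X)}}$, which is exactly the claim.

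The only delicate point is the handling of degenerate columns — a column exactly on the negative $u$-axis (angle $\pi$) or the zero column, and ties in angle. These cases can be resolved by fixing a tie-breaking rule, or simply by noting that on the boundary either choice of sign puts the column on the real axis, in which case its angle is either $0$ or $\pi$ and the resulting minors with any other column are still non-negative once sorting is arranged so that such columns come first (for angle $0$) or last (for angle $\pi$). This is the main technical wrinkle, but it does not affect the multiset of absolute values, so the conclusion survives.
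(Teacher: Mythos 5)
Your proposal is correct and follows essentially the same route as the paper: flip the sign of any column with negative second coordinate (preserving the absolute values of all minors), then sort the columns counterclockwise so that every minor of the resulting matrix $Y$ is non-negative and equal to the corresponding $\abs{\D{i,j}(X)}$. Your explicit angle formula $\norm{x_i}\norm{x_j}\sin(\theta_j-\theta_i)$ and your attention to degenerate columns merely flesh out details the paper leaves implicit.
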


If we recall that the absolute value of the determinant equals to the area of the respective parallelogram, then the loss function $B(X)$ and its extremal value $C$ have a pleasant planar geometry interpretation.
For a polygon $X$ with vertices $x_i$, the value $B(X)$ is the ratio of the maximum-area triangle $(x_i,O,x_j)$ and the minimum one, where $O$ is the origin of the plane.
The polygon defined by the cyclic matrix $C$ is the upper half of the regular $2n$ polygon,
and by Theorem \ref{T:C_minimizes_E} and Proposition \ref{P:B_E_loss_equivalence}, it minimizes $B(X)$.

\section{Uniqueness}

In Theorem \ref{T:main} we proved that the span $[C]$ of the cyclic matrix $C$, defined by the equation \eqref{D:cyclic_matrix}, is an extremal point. In this section we discuss when it is the only one.

Our next theorem shows that, for any extremal point, the structure of its Plucker coordinates $\D{i,j}$ is similar to that of $C$, but stops short of implying uniqueness for all $n$. And, in fact, we show the uniqueness fails for any $n \mod 4 = 2$.
\begin{theorem}
\label{T:main}
If $x \in \Grp$ is extremal, then
\begin{enumerate}
\item\label{I:main:D_k_equal_s_k}
The geometric means $D_k$ satisfy
\begin{equation*}
    D_k = s_k,
\end{equation*}
for all $k \in [n-1]$.

\item\label{I:main:s_ij_bound}
All the coordinates $\D{i,j}$ are bounded by $s_1$ and $s_d$:
\begin{equation*}
    s_1 \leq \D{i,j} \leq s_d.
\end{equation*}{}

\item \label{I:main:D_ij_constant_over_outer_orbits}
The coordinates $\D{i,j}$ are constant over each of the two outer orbits $O_1$ and $O_d$:
\begin{equation*}
    \D{i,j} = s_k,
\end{equation*}
for all $(i,j) \in O_k$, where $k=1,d$.
\end{enumerate}
\end{theorem}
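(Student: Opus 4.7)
The plan is to force equality throughout the inequality chain used in the proof of Theorem \ref{T:C_minimizes_E}. Combining Proposition \ref{P:L_leq_E} with Theorem \ref{T:C_minimizes_E}, for every $x \in \Grp$ one has $L(C) \leq L(x) \leq E(x)$ with $L(C) = E(C) = s_d/s_1$. If $x$ is extremal then $E(x) = E(C)$, so the entire chain collapses to $L(x) = E(x) = s_d/s_1$.

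For item \eqref{I:main:D_k_equal_s_k}, the equality $L(x) = s_d/s_1$ together with the normalization $D_1 = s_1$ yields $D_d = s_d$, i.e.\ $a_d = 0$. Assuming $n \geq 4$ so that $d$ lies in the inner range $[2,n-2]$, I invoke the contrapositive of the diffusion statement in Corollary \ref{C:a_k_are_positive}: since $a_k \geq 0$ and any positive inner $a_k$ forces all inner $a_k$ to be positive, the vanishing of a single inner $a_k$ forces all inner $a_k$ to vanish. Together with the boundary values $a_1 = a_{n-1} = 0$ from Corollary \ref{C:a_k_are_sub_concave}, this gives $a_k = 0$, hence $D_k = s_k$, for every $k \in [n-1]$. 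The small cases $n \leq 3$ are verified by direct inspection, since then $d$ is a boundary index.

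For items \eqref{I:main:s_ij_bound} and \eqref{I:main:D_ij_constant_over_outer_orbits}, I dissect the two-step chain
$$E(x) \;\geq\; \frac{\max_{(i,j)\in O_d}\D{i,j}}{\min_{(i,j)\in O_1}\D{i,j}} \;\geq\; \frac{D_d}{D_1} \;=\; L(x)$$
used to prove Proposition \ref{P:L_leq_E}. Extremality forces both inequalities to be equalities. The second equality says that the maximum of the positive numbers $\{\D{i,j} : (i,j) \in O_d\}$ coincides with their geometric mean, which happens if and only if these numbers are all equal; hence $\D{i,j}$ is constant on $O_d$, necessarily equal to $D_d = s_d$, and symmetrically $\D{i,j}$ is constantly $s_1$ on $O_1$. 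This is item \eqref{I:main:D_ij_constant_over_outer_orbits}. The first equality then identifies $\max \D{i,j} = s_d$ and $\min \D{i,j} = s_1$, proving item \eqref{I:main:s_ij_bound}.

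The one genuinely delicate move is extracting the contrapositive of the diffusion principle in Corollary \ref{C:a_k_are_positive} and verifying that $d \in [2,n-2]$ so that the principle applies at index $d$; everything else is routine equality analysis of bounds already established.
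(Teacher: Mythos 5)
Your proposal is correct and follows essentially the same route as the paper: extremality forces $a_d=0$, and Corollary \ref{C:a_k_are_positive} (via the diffusion/contrapositive argument, which the paper invokes more tersely) gives $D_k=s_k$ for all $k$, while items \eqref{I:main:s_ij_bound} and \eqref{I:main:D_ij_constant_over_outer_orbits} come from forcing equality in the chain of Proposition \ref{P:L_leq_E} and using that a geometric mean equals the max (or min) only for constant families. The only difference is cosmetic -- you establish constancy on the outer orbits before the global bounds, whereas the paper does the reverse -- and your explicit check that $d\in[2,n-2]$ is a reasonable added precaution.
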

\begin{proof}\hfill
\begin{enumerate}
    \item Let again $a_k$ be the normalized log of the geometric mean $D_k$; see \eqref{D:a_k}.
    By the definition of $a_k$, the identity $D_k=s_k$ is equivalent to $a_k = 0$, for all $k \in [n-1]$.
    Since, by the assumption, $x$ is extremal, $E(x) = E(C)$, and therefor, by Proposition \ref{P:L_leq_E}, $D_d = s_d$, which is equivalent to $a_d=0$. By Corollary \ref{C:a_k_are_positive}, it implies all $a_k=0$.
    
    \item Since, by item \eqref{I:main:D_k_equal_s_k}, the geometric mean $D_d$ is equal to $s_d$, the maximum $M$ of the coordinates $\D{i,j}$ is weakly bigger than $s_d$:
    $$ M = \max\limits_{(i,j) \in \binom{[n]}{2}} \D{i,j} \geq \max\limits_{(i,j) \in O_d} \D{i,j} \geq D_d = s_d. $$
    Similarly, $D_1 = s_1$ implies
    $$ m = \min \D{i,j} \leq D_1 = s_1. $$
    On the other hand, since $x$ is extremal by the assumption, and $C$ is extremal by Theorem \eqref{T:C_minimizes_E},
    $$ \frac{M}{m} = E(x) = E(C) = \frac{s_d}{s_1}, $$
    which implies the weak inequalities for $m$ and $M$ must in fact be equalities.
    
    \item Consider, for example, the case $k=1$. By item \eqref{I:main:s_ij_bound},
    all $\D{i,j} \geq s_1$, where $(i,j) \in O_1$. On the other hand, their geometric mean $D_1$ equals $s_1$ by item \eqref{I:main:D_k_equal_s_k}, which implies $\D{i,j} = s_1$, for all $(i,j) \in O_1$.
\end{enumerate}
\end{proof}

\begin{proposition}
\label{P:uniqueness_n_eq_4}
For $n=4$, the span $[C]$ of the cyclic matrix $C$ is the unique extremal point.
\begin{proof}
For $n=4$, the number of unique orbits $d = \floor{n/2} = 2$; see Lemma \ref{L:orbits_structure}.
In other words, $O_1$ and $O_2$ are the only $\sigma$-orbits. Let $x \in \Grp$ be an extremal point. By Theorem \ref{T:main}, item \eqref{I:main:D_ij_constant_over_outer_orbits}, its Plucker coordinates $\D{i,j}$ are equal to those of $C$, for all $(i,j) \in \binom{[n]}{2}$. Since the Plucker coordinates uniquely identify the point (\cite{miller_sturmfels} Proposition 14.2), $x=[C]$.
\end{proof}
\end{proposition}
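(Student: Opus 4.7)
The plan is to exploit the fact that when $n=4$, the two outer orbits singled out by Theorem \ref{T:main} already exhaust all orbits. Concretely, by Lemma \ref{L:orbits_structure}, the number of distinct orbits is $d = \floor{4/2} = 2$, and under the involution $k \mapsto \bar k = n - k$ we have $\bar 1 = 3$ and $\bar 2 = 2$, so the only distinct orbits are $O_1 = O_3$ and $O_2$. These are precisely the orbits $O_1$ and $O_d$ appearing in item \eqref{I:main:D_ij_constant_over_outer_orbits} of Theorem \ref{T:main}.

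Next, I would fix an extremal $x \in \Grp$ and apply Theorem \ref{T:main} item \eqref{I:main:D_ij_constant_over_outer_orbits}, which forces $\D{i,j}(x) = s_1$ for all $(i,j) \in O_1$ and $\D{i,j}(x) = s_2$ for all $(i,j) \in O_2$. Because every index pair in $\binom{[4]}{2}$ lies in one of these two orbits, this pins down every Plucker coordinate of $x$. By Proposition \ref{P:properties_of_C} the cyclic matrix $C$ has exactly the same values $s_k$ on each orbit, so the Plucker coordinates of $x$ and $[C]$ agree.

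To conclude, I would invoke the fact (cited from \cite{miller_sturmfels} Proposition 14.2) that Plucker coordinates determine a point on the Grassmannian up to overall scaling, so $x = [C]$. There is no real obstacle in this argument; the key conceptual point is simply recognizing that for $n=4$ the adjective \emph{outer} is vacuous — every orbit is an outer orbit — which upgrades the partial rigidity given by Theorem \ref{T:main} into full rigidity.
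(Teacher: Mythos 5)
Your proposal is correct and follows essentially the same route as the paper: for $n=4$ the outer orbits $O_1$ and $O_d=O_2$ exhaust all orbits, so Theorem \ref{T:main} item \eqref{I:main:D_ij_constant_over_outer_orbits} fixes every Plucker coordinate to agree with $C$, and \cite{miller_sturmfels} Proposition 14.2 then gives $x=[C]$. The only difference is that you spell out the involution $O_1=O_3$ and the matching with Proposition \ref{P:properties_of_C}, which the paper leaves implicit.
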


For any $n$, each of the orbits $O_1, \dots, O_{d-1}$ has $n$ distinct elements. The orbit $O_d$ is special: for an odd $n$ it has $n$ distinct elements, while for an even $n$ it has only $n/2$ distinct elements. It turns out that, for the odd $n$, the $2 n$ values $\D{i,j}$, where $(i,j) \in O_1 \cup O_d$, uniquely identify a point $x \in \Grp$. To prove this, we need to establish an identity that could be regarded as a vector form of the Plucker relation \eqref{D:plucker_identity}.

For two planar vectors $u,v \in \R^2$, let the \emph{wedge product} $u \wedge v$ denote the determinant of the $2\times 2$ column matrix $[u, v]$.

\begin{lemma}\label{L:uvw_identity}
Let $u,v,w \in \R^2$ be three planar vectors. The vectors $u,v,w$ satisfy the vector identity
\begin{equation}
\label{E:uvw_identity}
    (u \wedge v) w = (u \wedge w) v - (v \wedge w) u.
\end{equation}
\begin{proof}
Let $z \in \R^2$ be an arbitrary planar vector. Consider the column matrix $[u,v,w,z]$ and apply the Plucker relation to it:
$$ (u \wedge w) (v \wedge z) = (u \wedge v) (w \wedge z) + (u \wedge z) (v \wedge w), $$
where the determinants are denoted by the wedge products.
Factor out $z$ from both sides of the equation:
$$ ((u \wedge w) v) \wedge z = ((u \wedge v) w  + (v \wedge w) u) \wedge z. $$
Since $z$ is arbitrary,
$$ (u \wedge w) v = (u \wedge v) w  + (v \wedge w) u, $$
from which \eqref{E:uvw_identity} immediately follows.

\end{proof}
\end{lemma}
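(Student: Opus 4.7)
The plan is to exploit the fact that $\R^2$ has dimension two, so any three planar vectors must be linearly dependent. This should reduce the claimed vector identity to a short calculation with wedge-product coefficients, without any need to invoke higher-dimensional Plucker relations.

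First I would dispose of the degenerate case $u \wedge v = 0$, in which $u$ and $v$ are parallel. Writing $u = \lambda v$ for some scalar $\lambda$ (the subcase $v = 0$ is handled symmetrically by exchanging $u$ and $v$), the left-hand side vanishes, while the right-hand side collapses to $\lambda (v \wedge w) v - (v \wedge w) \lambda v = 0$, so the identity holds trivially.

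In the generic case $u \wedge v \neq 0$, the pair $\{u, v\}$ is a basis of $\R^2$, so there exist unique scalars $\alpha, \beta$ with $w = \alpha u + \beta v$. I would extract these coefficients by wedging: wedging the expansion on the right with $v$ gives $w \wedge v = \alpha (u \wedge v)$, hence $\alpha = -(v \wedge w)/(u \wedge v)$; wedging $u$ on the left with the expansion gives $u \wedge w = \beta (u \wedge v)$, hence $\beta = (u \wedge w)/(u \wedge v)$. Multiplying $w = \alpha u + \beta v$ through by $u \wedge v$ and substituting these values then yields
$$(u \wedge v) w = -(v \wedge w) u + (u \wedge w) v,$$
which is exactly \eqref{E:uvw_identity}.

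There is no real obstacle beyond bookkeeping signs and the order of arguments in the wedge products. A more mechanical alternative, requiring no case split, is to expand each side in the standard coordinates $u = (u_1, u_2)$, $v = (v_1, v_2)$, $w = (w_1, w_2)$ and verify the resulting identity componentwise as a polynomial identity in six variables; this is conceptually less appealing but entirely routine.
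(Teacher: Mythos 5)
Your proof is correct, but it takes a different route from the paper. The paper's proof runs the identity backwards through the Plucker relation itself: it introduces an arbitrary auxiliary vector $z$, applies the relation \eqref{D:plucker_identity} to the $2\times 4$ matrix $[u,v,w,z]$, rewrites both sides as $(\cdot)\wedge z$, and then cancels $z$ using the nondegeneracy of the wedge pairing. That choice is deliberate in context: the paper explicitly presents Lemma \ref{L:uvw_identity} as ``a vector form of the Plucker relation,'' so deriving it from \eqref{D:plucker_identity} makes the connection transparent. Your argument instead uses only the two-dimensionality of $\R^2$: in the generic case $u\wedge v\neq 0$ you expand $w=\alpha u+\beta v$ and read off $\alpha=-(v\wedge w)/(u\wedge v)$, $\beta=(u\wedge w)/(u\wedge v)$ by wedging (Cramer's rule), which gives the identity after clearing the denominator; the degenerate case $u\wedge v=0$ is checked directly, and your remark that the $v=0$ subcase follows ``by symmetry'' is fine because swapping $u\leftrightarrow v$ negates both sides of \eqref{E:uvw_identity}, so the swapped identity is equivalent to the original. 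Your route is self-contained (no Plucker relation, no auxiliary vector, no cancellation lemma) and arguably more elementary, at the cost of a case split and of obscuring the fact--useful for the paper's narrative--that the identity is literally a repackaged four-term Plucker relation; your coordinate-expansion alternative is also valid and avoids even the case split.
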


\begin{theorem}
\label{T:outer_orbits_identify_point}
For an odd $n$, the outer orbits coordinates $\D{i,j}$, where $(i,j) \in O_1 \cup O_d$, uniquely identify a point $x \in \Grp$.
\begin{proof}
In this proof we allow out-or-order subscripts for $\D{i,j}$; as usual $\D{i,j} = -\D{j,i}$.
Define the integer sequence $c(k)$ as the orbit of $1$ under the iterations of the permutation $\sigma^d$:
$$ c(k) = \sigma^{k d} \, 1,$$
where $k \in [0,n-1]$.
\newcommand{\xc}[1]{x_{c(#1)}}
Let $X$ be a spanning matrix for $x$, i.e. $x=[X]$, and let $x_m$ denote the $m$-th column of the matrix $X$.
Fix $k \geq 2$, and apply the identity \eqref{E:uvw_identity} to the triplet  $\xc{k-2}, \xc{k-1}, \xc{k}$:
$$ (\xc{k-2} \wedge \xc{k-1}) \xc{k} = (\xc{k-2} \wedge \xc{k}) \xc{k-1} - (\xc{k-1} \wedge \xc{k}) \xc{k-2}. $$
Since, by the definition, $x_i \wedge x_j = \D{i,j}$, the previous identity can be written as
\begin{equation}
\label{E:ck_identity}
    \D{c(k-2),c(k-1)} \xc{k} = \D{c(k-2),c(k)} \xc{k-1} - \D{c(k-1),c(k)} \xc{k-2}.
\end{equation}
Since, by the assumption, $x \in \Grp$, all $\D{i,j} \neq 0$ and we can divide \eqref{E:ck_identity} by $\D{c(k-2),c(k-1)}$:
\begin{equation}
\label{E:ck_iterative}
   \xc{k} =  \D{c(k-2),c(k-1)}^{-1} \prnths{\D{c(k-2),c(k)} \xc{k-1} - \D{c(k-1),c(k)} \xc{k-2}}.
\end{equation}

We now show that all the $\D{i,j}$ in \eqref{E:ck_iterative}, up to a sign, come from the outer orbits $O_1$ and $O_d$.
Indeed, by the definition,
$$c(k-1) = \sigma^d c(k-2).$$
If $c(k-2) < c(k-1)$, then the index $(c(k-2), c(k-1)) \in O_d$, by the definition of $O_d$, otherwise its transposition
\footnote{recall that, by the definition of the orbits $O_k$, indexes $(i,j)$ are ordered pairs, while the pair $(c(k-2), c(k-1))$ might be out of order.}
$(c(k-1), c(k-2)) \in O_d$. In either case, the value $\D{c(k-2),c(k-1)}$, up to a sign, comes from $O_d$.
Similarly, inside the parenthesis, the index $(c(k-1), c(k))$ in the second term, up to a transposition, belongs to $O_d$.
And the index $(c(k-2), c(k))$ in the first term, up to a transposition, belongs to $O_{2d} = O_1$; see Lemma \ref{L:orbits_structure}, item \eqref{I:orbits_involution}.

By starting the recurrence \eqref{E:ck_iterative} from $k=2$, and running it through $k=n$, we prove that each $x_{c(k)}$, ultimately, is a linear combination of the initial two columns $x_{c(0)}$ and $x_{c(1)}$, where the coefficients of the linear combinations are determined by the outer orbits coordinates $\D{i,j}$.

Since $\gcd(d, n) = \gcd(d, 2d+1) = 1$, the permutation $\sigma^d$ is a generator of the cyclic group $\cyclic{\sigma}$, and therefor the sequence $c(k)$ visits each $m \in [n]$.
This implies that each column $x_m$ is a linear combination of $x_{c(0)}$ and $x_{c(1)}$  with the coefficients determined by the outer orbits coordinates, and thus the span $x = [X]$ is uniquely determined by the coordinates.

\end{proof}
\end{theorem}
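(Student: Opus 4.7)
The plan is to reconstruct the point $x = [X]$ from the outer orbit data by writing every column of a spanning matrix $X$ as an explicit linear combination of two distinguished starting columns, with the coefficients read off entirely from the outer orbit coordinates. The oddness of $n$ enters decisively: writing $n = 2d+1$ gives $\gcd(d, n) = 1$, so the cyclic shift $\sigma^d$ generates $\Z_n$, and moreover $2d \equiv -1 \pmod n$.

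First I would record the two-dimensional vector Plucker identity from Lemma \ref{L:uvw_identity}: for any three columns $x_i, x_j, x_k$ of $X$,
$$ \D{i,j}\, x_k \;=\; \D{i,k}\, x_j \;-\; \D{j,k}\, x_i. $$
Since $x \in \Grp$, every Plucker coordinate is nonzero, so this identity can be solved for $x_k$ whenever $x_i$, $x_j$ and the three listed coordinates are available.

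Next I would choose the triples so that each instance of the identity involves only outer orbit coordinates. Set $c(k) = \sigma^{kd}(1)$ for $k = 0, 1, \dots, n-1$; by the coprimality of $d$ and $n$ this sequence is a permutation of $[n]$. Consecutive differences satisfy $c(k) - c(k-1) \equiv d \pmod n$, so the unordered pair $\{c(k-1), c(k)\}$ corresponds (after reordering) to an element of $O_d$; and since $2d \equiv -1 \pmod n$, the pair $\{c(k-2), c(k)\}$ corresponds to an element of $O_1$. Applying the vector identity to the triple $(x_{c(k-2)}, x_{c(k-1)}, x_{c(k)})$ and dividing by the nonzero outer orbit entry $\D{c(k-2),c(k-1)}$ yields a three-term recurrence
$$ x_{c(k)} \;=\; \alpha_k\, x_{c(k-1)} \,+\, \beta_k\, x_{c(k-2)}, $$
in which $\alpha_k, \beta_k$ are signed ratios of outer orbit coordinates.

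Finally I would iterate the recurrence. An induction on $k$ starting from the base pair $(x_{c(0)}, x_{c(1)})$ expresses every column $x_{c(k)}$ as a fixed linear combination of these two, with coefficients manufactured solely from the outer orbit data. Since $c$ is a permutation of $[n]$, this covers all columns; any two choices of the base pair differ by a linear automorphism of $\R^2$ and so produce the same span. Thus the span $[X]$, hence the point $x$, is uniquely pinned down by the outer orbit coordinates. The main subtlety to get right is the sign bookkeeping: the subscript pair $(c(k-2), c(k-1))$ need not be in increasing order, so one must track the minus signs arising from any transpositions needed to convert the $\D{\cdot,\cdot}$ appearing in the recurrence into the positive values indexed by the actual orbits $O_1$ and $O_d$.
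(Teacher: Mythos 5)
Your proposal is correct and follows essentially the same route as the paper: the same sequence $c(k)=\sigma^{kd}\,1$, the same application of the vector Plucker identity of Lemma \ref{L:uvw_identity} to consecutive triples (with the pairs $\{c(k-1),c(k)\}$ in $O_d$ and $\{c(k-2),c(k)\}$ in $O_{2d}=O_1$), and the same induction expressing every column in terms of the base pair, using $\gcd(d,n)=1$ to cover all columns. The sign bookkeeping and the observation that a change of base pair is absorbed by a $GL_2$ transformation, hence does not affect the span, match the paper's argument.
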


\begin{proposition}
\label{P:uniqueness_n_odd}
For an odd $n$, the span $[C]$ of the cyclic matrix $C$ is the unique extremal point.
\begin{proof}
If $x \in \Grp$ is extremal, its outer orbits $\D{i,j}$ coincide with those of $C$; see Theorem \ref{T:main}, item \eqref{I:main:D_ij_constant_over_outer_orbits}. By Theorem \ref{T:outer_orbits_identify_point}, the point $x$ equals $[C]$.
\end{proof}
\end{proposition}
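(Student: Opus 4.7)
The plan is to combine the two preceding results. Theorem \ref{T:main} pins down the Plucker coordinates of any extremal point on the outer orbits, while Theorem \ref{T:outer_orbits_identify_point} asserts that, for odd $n$, the outer orbit data suffice to recover the point in $\Grp$. Feeding the former into the latter yields the uniqueness claim immediately.

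In detail, I would start by fixing an arbitrary extremal point $x \in \Grp$. By Theorem \ref{T:main} item \eqref{I:main:D_ij_constant_over_outer_orbits}, the Plucker coordinates $\D{i,j}(x)$ are constant on each of $O_1$ and $O_d$, taking the values $s_1$ and $s_d$ respectively. By Proposition \ref{P:properties_of_C} item 1, the Plucker coordinates of the cyclic matrix $C$ on $O_1$ and $O_d$ take these same two values. Hence $x$ and $[C]$ agree on every coordinate indexed by $O_1 \cup O_d$. Theorem \ref{T:outer_orbits_identify_point} then forces $x = [C]$.

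No new obstacle arises in this final step, because both inputs are already available. The genuine work in establishing uniqueness for odd $n$ lies in Theorem \ref{T:outer_orbits_identify_point}, where the vector form of the Plucker relation (Lemma \ref{L:uvw_identity}) combined with the arithmetic coincidence $\gcd(d,n) = \gcd(d, 2d+1) = 1$ lets one reconstruct every column of a spanning matrix from the outer orbit data. Were that result not at hand, the main obstacle would be precisely to show that partial information on only two of the $d$ orbits determines the full two-dimensional subspace; the coprimality condition, which is what distinguishes the odd case, is what makes this reconstruction go through.
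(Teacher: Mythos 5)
Your proposal is correct and follows essentially the same route as the paper: apply Theorem \ref{T:main}, item \eqref{I:main:D_ij_constant_over_outer_orbits}, to identify the outer-orbit Plucker coordinates of any extremal point with those of $C$, then invoke Theorem \ref{T:outer_orbits_identify_point} to conclude $x=[C]$. Your extra citation of Proposition \ref{P:properties_of_C} to confirm $C$'s outer-orbit values is a harmless elaboration of the same argument.
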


\begin{proposition}
\label{P:non_uniqueness_n_mod_4_eq_2}
If $n \mod 4 = 2$, then there exists a continuous family of matrices $C^q$, where $q > 0$, such that
\begin{enumerate}
    \item $C^1 = C$.
    \item\label{I:Cq_row_spans_distinct} the row spans $[C^q]$ are distinct for all $q > 0$.
    \item\label{I:ECq_equals_EC} $E(C^q) = E(C)$, for all $q$ in a sufficiently small neighborhood of $1$.
\end{enumerate}

\begin{proof}
Let $c_i$ denote the $i$-th column of the cyclic matrix $C$.
Fix $q>0$, and define the \emph{$q$-transform} that multiplies odd-numbered columns by $1/q$ and the even-numbered ones by $q$:
$$ c_i^q = q^{(-1)^i} c_i.$$
Define the matrix $C^q$:
$$ C^q = [c_1^q, \dots, c_n^q]. $$
Note that $C^1 = C$.
Let $\D{i,j}$ and  $\D{i,j}^q$ denote the Plucker coordinates of the cyclic matrix $C$ and $C^q$ respectively.
Compute $\D{i,j}^q$:
$$ \D{i,j}^q = c_i^q \wedge c_{j}^q = q^{(-1)^i + (-1)^j} \D{i,j} = q^{(-1)^i(1 + (-1)^{j-i})} \D{i,j}.$$
Fix $k \in [n-1]$, and examine the above identity for $(i,j) \in O_k$.
If $(i,j) \in O_k$, then by Lemma \ref{L:orbits_structure}, item \eqref{I:orbits_involution_union},
either $j-i=k$, or $j-i=\bar{k}$. If $j-i=k$, then
\begin{equation}
\label{E:non_uniqueness_n_mod_4_eq_2:k}
     \D{i,j}^q = q^{(-1)^i (1 + (-1)^k)} \D{i,j}.
\end{equation}
If $j-i=\bar{k}$, then the previous identity still holds:
\begin{equation}
\label{E:non_uniqueness_n_mod_4_eq_2:k_bar}
\begin{aligned}
     \D{i,j}^q &= q^{(-1)^i (1 + (-1)^{\bar{k}})} \D{i,j} \\
     &= q^{(-1)^i (1 + (-1)^k)} \D{i,j} && \text{, since } (-1)^{\bar{k}} = (-1)^k \text{ for even } n.
\end{aligned}
\end{equation}
Combine the two identities \eqref{E:non_uniqueness_n_mod_4_eq_2:k} and \eqref{E:non_uniqueness_n_mod_4_eq_2:k_bar} into a single identity for $O_k$:
\begin{equation}
\label{E:D_ij_on_k_orbit}
     \D{i,j}^q = q^{(-1)^i (1 + (-1)^k)} \D{i,j},
\end{equation}
where $(i,j) \in O_k$.

For the odd $k$, equation \eqref{E:D_ij_on_k_orbit} simplifies to
\begin{equation}
\label{E:D_ij_on_odd_orbit}
     \D{i,j}^q = \D{i,j},
\end{equation}
and for the even $k$, to
\begin{equation}
\label{E:D_ij_on_even_orbit}
     \D{i,j}^q = q^{2 (-1)^i} \D{i,j}.
\end{equation}

We now prove item \eqref{I:Cq_row_spans_distinct} by contradiction. Let $p,q$ be two distinct positive numbers.
Assume $[C^p] = [C^q]$, then their Plucker coordinates must be proportionate, for some factor $\lambda$:
\begin{equation}
\label{E:Dij_p_is_proportional_to_Dij_q}
    \D{i,j}^p = \lambda \D{i,j}^q,
\end{equation}
for all $(i,j) \in \binom{[n]}{2}$.
Since the $q$-transform doesn't change $\D{i,j}$ on the odd-numbered orbits, $\lambda = 1$. On the other hand,
for any index $(i,i+k)$ on any even-numbered orbit $O_k$,
\begin{align*}
    \D{i,i+k}^p = p^{2 (-1)^i} \D{i,i+k} \neq q^{2 (-1)^i} \D{i,i+k} = \D{i,i+k}^q,  && \text{since $p \neq q$, }
\end{align*}
which contradicts the proportionality condition \eqref{E:Dij_p_is_proportional_to_Dij_q}.

We now prove item \eqref{I:ECq_equals_EC}. For the cyclic matrix $C$, by Proposition \ref{P:properties_of_C},
\begin{equation}
\label{E:D_ij_of_C}
\begin{aligned}
    \D{i,j} = s_1, & \text{ where $(i,j) \in O_1$,} \\
    \D{i,j} = s_d, & \text{ where $(i,j) \in O_d$,} \\
    s_1 < \D{i,j} < s_d, & \text{ where $(i,j) \in O_k$, and $k \in [2,d-1]$.}
\end{aligned}
\end{equation}
Pass from $\D{i,j}$ to $\D{i,j}^q$ in each of the equations \eqref{E:D_ij_of_C}:
\begin{equation}
\label{E:D_ij_of_Cq}
\begin{aligned}
    \D{i,j}^q = s_1, & \text{ where $(i,j) \in O_1$, since $O_1$ is an odd-numbered orbit,} \\
    \D{i,j}^q = s_d, & \text{ where $(i,j) \in O_d$, since $O_d$ is an odd-numbered orbit, for $n \mod 4 = 2$,} \\
    s_1 < \D{i,j}^q < s_d, & \text{ where $(i,j) \in O_k$, and $k \in [2,d-1]$,}
\end{aligned}
\end{equation}
for all $q$ in a sufficiently small neighborhood of $1$, since the $q$-transform is continuous.
The equations \eqref{E:D_ij_of_Cq} imply
\begin{equation*}
\begin{aligned}
    & \min \D{i,j}^q = s_1, \\
    & \max \D{i,j}^q = s_d, \\
    & E(C^q) = \frac{s_d}{s_1} = E(C),
\end{aligned}
\end{equation*}
for all $q$ in the neighbourhood of $1$.
\end{proof}
\end{proposition}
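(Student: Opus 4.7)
The plan is to construct the family $C^q$ explicitly by a diagonal rescaling of the columns of the cyclic matrix $C$, and then to verify that this rescaling leaves the outer orbits invariant while perturbing only the inner orbits, which have strict slack.

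First I would define the \emph{$q$-transform} on columns by $c_i^q = q^{(-1)^i} c_i$ and set $C^q = [c_1^q, \dots, c_n^q]$, so $C^1 = C$. A direct computation of the $2 \times 2$ minor yields
\begin{equation*}
    \Delta_{i,j}^q = q^{(-1)^i + (-1)^j} \Delta_{i,j} = q^{(-1)^i(1 + (-1)^{j-i})} \Delta_{i,j},
\end{equation*}
so the exponent depends only on the parity of $j - i$ (and on $i$). For $(i,j) \in O_k$, Lemma \ref{L:orbits_structure} item \eqref{I:orbits_involution_union} gives $j - i \in \{k, \bar{k}\}$, and because $n$ is even we have $(-1)^{\bar k} = (-1)^{n-k} = (-1)^k$. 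Therefore the parity of $j - i$ is the same for every pair $(i,j)$ in a fixed orbit $O_k$, namely the parity of $k$, and the exponent simplifies to $0$ for odd $k$ and $2(-1)^i$ for even $k$.

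Next I would exploit the key numerical coincidence specific to $n \equiv 2 \pmod 4$: here $d = n/2$ is odd, and $1$ is odd, so both outer orbits $O_1$ and $O_d$ are odd-numbered and the $q$-transform fixes all their Plucker coordinates pointwise. By Proposition \ref{P:properties_of_C} the coordinates of $C$ on $O_1$ and $O_d$ equal $s_1$ and $s_d$ respectively, while the coordinates on the inner orbits $O_k$, $2 \le k \le d-1$, lie strictly between $s_1$ and $s_d$. For $q$ close to $1$, continuity of the $q$-transform preserves these strict inequalities for the inner orbits, so $\min \Delta_{i,j}^q = s_1$ and $\max \Delta_{i,j}^q = s_d$, giving $E(C^q) = s_d/s_1 = E(C)$. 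This handles item \eqref{I:ECq_equals_EC}.

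Finally, for distinctness of the spans in item \eqref{I:Cq_row_spans_distinct}, I would argue by contradiction: if $[C^p] = [C^q]$ with $p \neq q$, then their Plucker coordinates are proportional by a common factor $\lambda$. The invariance on the odd-numbered orbits (in particular on $O_1$) forces $\lambda = 1$, but then the nontrivial rescaling on any even-numbered orbit produces a strict mismatch $p^{2(-1)^i} \ne q^{2(-1)^i}$. The main subtlety I would watch out for is verifying that at least one even-numbered orbit exists (which it does since $d \ge 2$ and $O_2$ is even-numbered), so the argument has teeth; everything else is routine bookkeeping once the parity observation $(-1)^{\bar k} = (-1)^k$ is in hand.
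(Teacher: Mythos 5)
Your proposal is correct and follows essentially the same route as the paper: the same $q$-transform $c_i^q = q^{(-1)^i}c_i$, the same parity analysis showing odd-numbered orbits (including both outer orbits since $d=n/2$ is odd when $n \equiv 2 \pmod 4$) are fixed while even-numbered orbits are rescaled, the same continuity argument for $E(C^q)=E(C)$, and the same proportionality contradiction for distinctness of spans. Your extra check that an even-numbered orbit actually exists is a small point of care the paper leaves implicit, but it does not change the argument.
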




\end{document}